\documentclass[10pt,a4paper,twoside]{amsart}

\usepackage{graphicx}
\usepackage{a4wide}

\newtheorem{definition}{Definition}[section]
\newtheorem{theorem}{Theorem}[section]
\newtheorem{lemma}{Lemma}[section]

\newtheorem*{maintheorem*}{Main Theorem}
\allowdisplaybreaks
\numberwithin{equation}{section}

\newcommand{\Kruzkov}{Kru\v{z}kov~}

\newcommand{\Set}[1]{\left\{#1\right\}}

\newcommand{\norm}[1]{\left\| #1 \right\|}

\newcommand{\epsk}{\eps_k}

\newcommand{\eps}{\varepsilon}

\newcommand{\ue}{u_\eps}

\newcommand{\uek}{u_{\eps_k}}

\newcommand{\Fe}{F_\eps}
\newcommand{\Pe}{P_\eps}

\newcommand{\Pek}{P_{\eps_k}}

\newcommand{\ges}{g_\eps}

\newcommand{\pt}{\partial_t}
\newcommand{\pphi}{\partial_{\phi}}
\newcommand{\pphiphi}{\partial_{\phi\phi}^2}

\newcommand{\pq}{\partial_{x_1}}

\newcommand{\px}{\partial_x }
\newcommand{\pxx}{\partial_{xx}^2}

\renewcommand{\i}{\ifmmode\mathit{\mathchar"7010 }\else\char"10 \fi}
\renewcommand{\j}{\ifmmode\mathit{\mathchar"7011 }\else\char"11 \fi}
\newcommand{\R}{\mathbb{R}}
\newcommand{\N}{\mathbb{N}}

\newcommand{\Hneg}{H_{\mathrm{loc}}^{-1}}
\newcommand{\CL}{\mathcal{L}}
\newcommand{\CLea}{\mathcal{L}}

\newcommand{\sgn}[1]{\mathrm{sign}\left(#1\right)}

%\newcommand{\R}{{\mathbb{R}}}
%\newcommand{\NN}{\mathbb{N}}

%\newcommand{\N}{{\cal N}}

%\newcommand{\eps}{\varepsilon}

%\newcommand{\Tilde}{\widetilde}

%\newcommand{\forall}{\hbox{for all~}}
%\newcommand{\dag}{\dagger}

%\newcommand{\span}{\hbox{span}}

%\newcommand{\sgn}{\textrm{sgn }}

% Definition of assumptions
{%

\begin{enumerate}}%
{\end{enumerate}}

%
% Definition of definitions
{%

\begin{enumerate}}%
{\end{enumerate}}

\begin{document}\large

\title[The short pulse equation]{Wellposedness of bounded solutions \\of the non-homogeneous initial boundary\\ for the Short Pulse Equation}
\author[G. M. Coclite and L. di Ruvo]{Giuseppe Maria Coclite and Lorenzo di Ruvo}
\address[Giuseppe Maria Coclite and Lorenzo di Ruvo]
{\newline Department of Mathematics,   University of Bari, via E. Orabona 4, 70125 Bari,   Italy}
\email[]{giuseppemaria.coclite@uniba.it, lorenzo.diruvo@uniba.it}
\urladdr{http://www.dm.uniba.it/Members/coclitegm/}
\date{\today}

\keywords{Existence, uniqueness, stability, entropy solutions, conservation laws,
short pulse equation, boundary value problems.}

\subjclass[2000]{35G15, 35L65, 35L05, 35A05}

\begin{abstract}
The short pulse equation provides a model for the propagation of ultra-short light pulses in silica optical fibers. It is a nonlinear evolution equation.
In this paper  the wellposedness of bounded solutions for the inhomogeneous initial boundary value problem  associated to this equation is studied.
\end{abstract}

\maketitle

%\tableofcontents

\section{Introduction}
\label{sec:intro}
The short pulse equation has the form
\begin{equation}
\label{eq:SPE}
2\pq\pphi A_{0}+\chi^{(3)}\pphiphi A_{0}^3 + \frac{1}{c_2^2}A_{0} =0,
\end{equation}
where $A_0$ is the light wave amplitude, $\phi= \frac{t-x}{\eps}$, $x_1= \eps x$, $\eps$ is a small scale parameter, and  $\chi^{(3)}$ is the third order magnetic susceptibility. \eqref{eq:SPE} was introduced recently by
Sch\"afer and Wayne \cite{SW} as  a model equation describing the propagation of ultra-short light pulses in silica optical fibers.
It provides also an approximation of nonlinear wave packets in dispersive media in the limit of few cycles on the ultra-short pulse scale.
Numerical simulations \cite{CJSW} show that the short pulse equation approximation
to Maxwell's equations in the case when the pulse spectrum is not narrowly localized
around the carrier frequency is better than the one obtained from the nonlinear Schr\"odinger
equation, which models the evolution of slowly varying wave trains.
Such ultra-short plays a key role in the development of future technologies of ultra-fast optical transmission of informations.

In \cite{B} the author studied a new hierarchy of equations containing the short pulse equation \eqref{eq:SPE} and the elastic
beam equation, which describes nonlinear transverse oscillations of elastic beams
under tension. He showed that the hierarchy of equations is integrable. He obtained the two compatible Hamiltonian structures
and constructed an infinite series of both local and nonlocal conserved charges. Moreover, he gave the Lax description for both systems.
The integrability and the existence of solitary wave solutions have been studied in \cite{SS, SS1}.

Well-posedness and wave breaking for the short pulse equation have been
studied in \cite{SW} and \cite{LPS}, respectively.

\cite{Bo} (table $4.1.2$, pag. $212$) shows that, for some polymers, $\chi^{(3)}$ is a negative constant. Therefore, \eqref{eq:SPE} reads
\begin{equation}
\label{eq:SPE2}
2\pq\pphi A_{0}-k^2\pphiphi A_{0}^3 + \frac{1}{c_2^2} A_{0} =0, \quad \chi^{(3)}=-k^2.
\end{equation}
Following \cite{AVB, dR, E, KCS}, we consider the admensional form of \eqref{eq:SPE2} 
\begin{equation}
\label{eq:SPE1}
\px\left( \pt u + 3u^2 \px u\right)=u.
\end{equation}
Indeed,  multiplying \eqref{eq:SPE2} by $-c_2^2$, we have
\begin{equation}
\label{eq:001}
-2c_2^2\pq\pphi A_{0}+c_2^2k^2\pphiphi A_{0}^3 = A_{0}
\end{equation}
Consider the following Robelo transformation (see \cite{AVB, E, KCS}):
\begin{equation}
\label{eq:change1}
x_1=D_1 t , \quad \phi = D_2 x ,
\end{equation}
where $D_1$ and $D_2$ are two constants that will be specified later. Therefore,
\begin{equation}
\label{eq:change2}
\pq = D_1\pt, \quad \pphi=D_2 \px. \end{equation}
Taking $A_{0}(x_1,\phi)=u(t,x)$, it follows from \eqref{eq:SPE} and \eqref{eq:change2} that
\begin{equation}
\label{eq:spe1}
-2c_2^2 D_1D_2\px(\pt u) + 3c_2^2k^2 D_2^2\px\left(u^2\px u\right)=u.
\end{equation}
We choose $D_1$, $D_2$ so that
\begin{equation*}
 2c_2^2 D_{1}D_{2}=-1,\qquad  c_2^2k^2 D_2^2=1,
\end{equation*}
that is
\begin{equation}
\label{eq:change3}
D_{1}=-\frac{k}{2c_2}, \quad D_{2}=\frac{1}{c_2 k}.
\end{equation}
Therefore, \eqref{eq:SPE1} follows from \eqref{eq:spe1} and \eqref{eq:change3}.

It is interesting to remind that equation \eqref{eq:SPE1} was proposed earlier in \cite{NSC} in the context of plasma physic.

We are interested in the initial-boundary value problem for this equation, so we augment \eqref{eq:SPE1} with the boundary condition
\begin{equation}
\label{eq:boundary}
u(t,0)=g(t), \qquad t>0,
\end{equation}
and the initial datum
\begin{equation}
\label{eq:init}
u(0,x)=u_0(x), \qquad x>0,
\end{equation}
on which we assume that
\begin{equation}
\label{eq:assinit}
u_0\in L^{\infty}(0,\infty)\cap L^{1}(0,\infty), \quad \int_{0}^{\infty} u_{0}(x) dx =0.
\end{equation}
On the function
\begin{equation}
\label{eq:def-di-P0}
P_{0}(x)=\int_{0}^{x}u_{0}(y)dy,
\end{equation}
we assume that
\begin{equation}
\label{eq:l-2-di-P0}
\norm{P_{0}}^2_{L^2(0,\infty)}=\int_{0}^{\infty}\left(\int_{0}^{x} u_{0}(y)dy\right)^2 dx < \infty.
\end{equation}
On the boundary datum $g$, we assume that
\begin{equation}
\label{eq:ass-g}
g(t)\in L^{\infty}(0,\infty).
\end{equation}

Integrating \eqref{eq:SPE1} in $(0,x)$ we gain the integro-differential formulation of  \eqref{eq:SPE1} (see \cite{SS})
\begin{equation}
\label{eq:OHw-u}
\begin{cases}
\pt u+3u^2\px u = \int^x_0 u(t,y) dy,&\qquad t>0, \ x>0,\\
u(t,0)=g(t),& \qquad t>0,\\
u(0,x)=u_0(x), &\qquad x>0,
\end{cases}
\end{equation}
that is equivalent to
\begin{equation}
\label{eq:integ}
\begin{cases}
\pt u+ 3 u^2\px u= P, &\qquad t>0, \ x>0,\\
\px P=u, &\qquad t>0, \ x>0,\\
u(t,0)=g(t),& \qquad t>0,\\
P(t,0)=0,& \qquad t>0,\\
u(0,x)=u_0(x), &\qquad x>0.
\end{cases}
\end{equation}

One of the main issues in the analysis of \eqref{eq:integ} is that the equation is not preserving the $L^1$ norm, 
as a consequence the nonlocal source term $P$ and the solution $u$ are a priori only locally bounded. 
Indeed, from \eqref{eq:OHw-u} and \eqref{eq:integ} is clear that we cannot have any $L^\infty$ bound without an $L^1$ bound.
Since we are interested in the bounded solutions of \eqref{eq:SPE1}, some assumptions on the decay at infinity of the initial condition $u_0$ are needed.
The unique useful conserved quantities are 
\begin{equation*}
t\longmapsto\int u(t,x)dx=0,\qquad t\longmapsto\int u^2(t,x)dx.
\end{equation*}
In the sense that if $u(t,\cdot)$ has zero mean at time $t=0$, then it will have zero mean at any time $t>0$.
In addition, the $L^2$ norm  of $u(t,\cdot)$ is constant with respect to $t$.
Therefore, we require that initial condition $u_0$ belongs to $L^2\cap L^\infty$ and has zero mean.

Due to the regularizing effect of the $P$ equation in \eqref{eq:integ} we have that
\begin{equation}
\label{eq:OHsmooth}
    u\in L^{\infty}((0,T)\times(0,\infty))\Longrightarrow P\in L^{\infty}(0,T;W^{1,\infty}(0,\infty)), \quad T>0.
\end{equation}
Therefore, if a map $u\in  L^{\infty}((0,T)\times(0,\infty)),\,T>0,$  satisfies, for every convex
map $\eta\in  C^2(\R)$,
\begin{equation}
\label{eq:OHentropy}
   \pt \eta(u)+ \px q(u)-\eta'(u) P\le 0, \qquad     q(u)=\int^u 3\xi^2 \eta'(\xi)\, d\xi,
\end{equation}
in the sense of distributions, then \cite[Theorem 1.1]{CKK}
provides the existence of strong trace $u^\tau_0$ on the
boundary $x=0$.
%One of the main issues in the analysis of \eqref{eq:integ} is that the equation is not preserving the $L^1$ norm, the unique useful conserved quantities are
%\begin{equation*}
%t\longmapsto\int u(t,x)dx,\qquad t\longmapsto\int u^2(t,x)dx.
%\end{equation*}
%As a consequence the nonlocal source term $P$ and the solution $u$ are a priori only locally bounded.
%Since we are interested in the bounded solutions of \eqref{eq:SPE}, some assumptions on the decay at infinity of the initial condition $u_0$ is needed.
 %Regarding the flux function, here we use the cubic one
%\begin{equation*}
%u\longmapsto- \frac{u^3}{6}
%\end{equation*}
%because this is the one that appears in the original short-pulse equation. Anyway all our arguments can be generalized to subcubic genuinely nonlinear fluxes.
%The genuine nonlinearity assumption is necessary for the compactness argument based on the compensated compactness.
%The subcubic assumption together with the assumptions on the on the decay at infinity of the initial condition $u_0$ guarantees the boundedness of the solutions.

We give the following definition of solution (see \cite{BRN}):

\begin{definition}
\label{def:sol}
We say that  $u\in  L^{\infty}((0,T)\times(0,\infty))$, $T>0$, is an entropy solution of the initial-boundary
value problem \eqref{eq:SPE1}, \eqref{eq:boundary}, and  \eqref{eq:init} if
%\begin{itemize}
%\item[$i$)] $u$ is a distributional solution of \eqref{eq:OHw-u} or equivalently of \eqref{eq:OHw};
%\item[$ii$)]
for every nonnegative test function $\phi\in C^2(\R^2)$ with compact support, and $c\in\R$
\begin{equation}
\label{eq:ent2}
\begin{split}
\int_{0}^{\infty}\!\!\!\!\int_{0}^{\infty}\Big(\vert u - c\vert\pt\phi&+\sgn{u-c}\left(u^3- c^3\right)\px\phi\Big)dtdx\\
&+\int_{0}^{\infty}\!\!\!\!\int_{0}^{\infty}\sgn{u-c}P\phi dtdx\\
&+\int_{0}^{\infty}\sgn{g(t)-c}\left((u^\tau_0(t))^3-c^3\right)\phi(t,0)dt\\
&+\int_{0}^{\infty}\vert u_{0}(x)-c\vert\phi(0,x)dx\geq 0,
\end{split}
\end{equation}
where $u^\tau_0(t)$ is the trace of $u$ on the boundary $x=0$.
%\end{itemize}
\end{definition}

The main result of this paper  is the following theorem.
\begin{theorem}
\label{th:main}
Assume \eqref{eq:assinit}, \eqref{eq:l-2-di-P0} and \eqref{eq:ass-g}.
The initial-boundary value problem
\eqref{eq:SPE1}, \eqref{eq:boundary} and \eqref{eq:init} possesses
an unique entropy solution $u$ in the sense of Definition \ref{def:sol}.
Moreover, if $u$ and $v$ are two entropy solutions of  \eqref{eq:SPE1}, \eqref{eq:boundary}, \eqref{eq:init} in the sense of Definition \ref{def:sol} the following inequality holds
 \begin{equation}
 \label{eq:stability}
\norm{u(t,\cdot)-v(t,\cdot)}_{L^1(0,R)}\le  e^{C(T) t}\norm{u(0,\cdot)-v(0,\cdot)}_{L^1(0,R+C(T)t)},
\end{equation}
for almost every $0<t<T$, $R>0$, and some suitable constant $C(T)>0$.
\end{theorem}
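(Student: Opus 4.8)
The plan is to use vanishing viscosity for existence and \Kruzkov doubling of variables for uniqueness and $L^1$--stability, the genuinely new work being the control of the nonlocal source $P$ and of the boundary term in Definition \ref{def:sol}.

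\textbf{Step 1: viscous approximation and uniform bounds.} For $\eps>0$ I would study the parabolic problem obtained by adding $\eps\pxx u$ to the first line of \eqref{eq:integ}, with $u_0$ and $g$ suitably mollified; classical theory gives a smooth solution $(\ue,\Pe)$, and the point is to bound it uniformly in $\eps$ on $(0,T)\times(0,\infty)$. Multiplying the $\ue$--equation by $\ue$, using $\px\Pe=\ue$ and $\Pe(t,0)=0$, gives an energy identity controlling $\norm{\ue(t,\cdot)}_{L^2}$ on $[0,T]$ in terms of $\norm{u_0}_{L^2}$ and $\norm{g}_{L^\infty}$ (the boundary produces only an integrable $g^4$ term, and the viscous term has the good sign). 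Next, from the equation satisfied by $\Pe$, assumption \eqref{eq:l-2-di-P0}, and a Gronwall argument one bounds $\norm{\Pe(t,\cdot)}_{L^2}$; combined with $\px\Pe=\ue$ and the one--dimensional Gagliardo--Nirenberg inequality $\norm{\Pe}_{L^\infty}\le C\norm{\Pe}_{L^2}^{1/2}\norm{\ue}_{L^2}^{1/2}$ this yields $\Pe$ bounded in $L^\infty(0,T;W^{1,\infty}(0,\infty))$, exactly the regularizing effect \eqref{eq:OHsmooth}. Finally, because $\pt\ue+3\ue^2\px\ue-\eps\pxx\ue=\Pe$ has a bounded right-hand side, a maximum-principle/comparison argument (or integration along the transport structure $\dot u=\Pe$) gives $\norm{\ue}_{L^\infty((0,T)\times(0,\infty))}\le\max\{\norm{u_0}_{L^\infty},\norm{g}_{L^\infty}\}+T\norm{\Pe}_{L^\infty}$.

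\textbf{Step 2: compactness and passage to the limit.} With $\Set{\ue}_\eps$ bounded in $L^\infty$ and $\Set{\Pe}_\eps$ bounded in $L^\infty(0,T;W^{1,\infty})$, for each convex $\eta\in C^2$ the distribution $\pt\eta(\ue)+\px q(\ue)-\eta'(\ue)\Pe$ decomposes, after the standard viscous computation, into a term bounded in $L^1$ plus $\eps\,\px(\eta'(\ue)\px\ue)$, whence by Murat's lemma it is precompact in $\Hneg$; since the flux $\xi\mapsto\xi^3$ is affine on no interval, the Tartar--Murat compensated-compactness reduction forces the Young measure of $\Set{\ue}$ to be a Dirac mass, i.e.\ $\ue\to u$ a.e.\ (along a subsequence $\eps_k$), and $\Pe\to P$ strongly. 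Passing to the limit in the viscous \Kruzkov inequalities for $\abs{\ue-c}$ yields the interior terms of \eqref{eq:ent2}; the existence of the strong boundary trace $u^\tau_0$ is then provided by \cite[Theorem 1.1]{CKK} applied to the limit inequality \eqref{eq:OHentropy}, and the boundary term (third line of \eqref{eq:ent2}) is recovered by tracking the $x=0$ boundary contributions in the viscous entropy integrations and letting $\eps\to0$, as in \cite{BRN}. This proves existence.

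\textbf{Step 3: uniqueness and stability.} Given two entropy solutions $u,v$, I would apply the \Kruzkov doubling of variables in $(t,x)$ and $(s,y)$ with a test function localized in a truncated cone of slope $C(T):=3\sup_{0<t<T}\norm{u(t,\cdot)}_{L^\infty}^2$, the finite speed of propagation of the flux $u^3$. The usual manipulation, together with the boundary term in \eqref{eq:ent2} and the strong traces, produces the Kato-type inequality
\begin{equation*}
\pt\abs{u-v}+\px\!\left(\sgn{u-v}\left(u^3-v^3\right)\right)\le\sgn{u-v}\left(P_u-P_v\right)
\end{equation*}
in the sense of distributions on $(0,T)\times(0,\infty)$, with a nonpositive boundary contribution because $u$ and $v$ share the datum $g$. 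Integrating over the cone and writing $\Phi(t)=\norm{u(t,\cdot)-v(t,\cdot)}_{L^1(0,R+C(T)(T-t))}$, the nonlocal term is estimated pointwise by $\abs{P_u(t,x)-P_v(t,x)}=\abs{\int_0^x\left(u-v\right)(t,y)\dy}\le\Phi(t)$, so that $\Phi'(t)\le C\,\Phi(t)$ and Gronwall's inequality gives \eqref{eq:stability}; in particular $u=v$ when $u_0\equiv v_0$.

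\textbf{Main obstacle.} The delicate points, and where the hypotheses \eqref{eq:l-2-di-P0} and \eqref{eq:ass-g} and the structure of Definition \ref{def:sol} really enter, are (i) obtaining the global-in-space $L^\infty$ bounds of Step 1 in the absence of any $L^1$ conservation, which forces one to route the argument through the $L^2$ bound on $P$, and (ii) making the boundary term behave correctly --- both in showing the vanishing-viscosity limit satisfies the BLN-type boundary inequality (no $BV$ bound being available to control the viscous boundary layer) and in verifying that the boundary part of the doubling argument has the right sign, for which \cite[Theorem 1.1]{CKK} on strong traces is essential. The interior analysis and the handling of the nonlocal source $P$ are, by comparison, routine once the uniform estimates are in hand.
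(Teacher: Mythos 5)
Your overall route---vanishing viscosity, compensated compactness via Murat--Tartar, strong traces from \cite{CKK} for the boundary term, and \Kruzkov doubling with the pointwise bound $\abs{P_u-P_v}\le\norm{u-v}_{L^1}$ for stability---is exactly the paper's (the paper in fact does not redo the doubling argument at all: it observes the limit is bounded and invokes \cite[Theorem 1.1]{Cd} for uniqueness and \eqref{eq:stability}, so your Step 3 is more explicit than the text). The gap is in Step 1, and it is precisely the point where the non-homogeneous boundary condition bites. When you multiply the $\ue$-equation by $\ue$ and integrate over $(0,\infty)$, the viscous term does \emph{not} merely contribute the good-signed $-2\eps\int(\px\ue)^2$: integration by parts leaves the boundary term $2\eps\,\ges(t)\,\px\ue(t,0)$, which has no sign and does not obviously vanish as $\eps\to0$ (there is no a priori control of the boundary layer of $\px\ue$ at $x=0$). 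So your claim that the $L^2$ identity "controls $\norm{\ue(t,\cdot)}_{L^2}$ in terms of $\norm{u_0}_{L^2}$ and $\norm{g}_{L^\infty}$" is not yet justified, and your proposed order ($L^2$ bound on $\ue$ first, then $\Pe$) cannot close as written.

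The missing idea is the combined estimate the paper proves in Lemma \ref{lm:P-2u-4}: one integrates the $\ue$-equation from the boundary to get $\pt\Pe=\Fe-\ue^3+\ges^3+\eps\ue-\eps\px\ue(t,0)$ with $\Fe=\int_0^x\Pe$, multiplies by $\Pe$, and \emph{adds} the identity obtained by multiplying the $\ue$-equation by $2\ue^3$. The dangerous cross terms $\mp2\int\ue^3\Pe\,dx$ cancel between the two identities, and the Lyapunov functional $\tfrac12\norm{\ue}_{L^4}^4+\norm{\Pe}_{L^2}^2$ yields, after a Young inequality, the key bound $\eps^2\int_0^t(\px\ue(s,0))^2\,ds\le C(T)$. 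Only with this in hand does $2\eps\int_0^t\ges\,\px\ue(s,0)\,ds\le\int_0^t\ges^2+\eps^2\int_0^t(\px\ue(s,0))^2\le C(T)$ close the $L^2$ estimate for $\ue$, and hence the $L^\infty$ bound on $\Pe$ and the comparison-principle bound on $\ue$. For the same reason, your suggestion to bound $\norm{\Pe}_{L^2}$ "by Gronwall from the equation satisfied by $\Pe$" alone would fail: the term $-2\int\ue^3\Pe\,dx$ is not controlled by $\norm{\Pe}_{L^2}^2$ and the $L^2$ bound on $\ue$; it needs the $L^4$ coupling. The same quantity $\eps^2\int_0^t(\px\ue(s,0))^2\,ds$ is also what makes the boundary contributions in your Step 2 entropy inequalities converge, so the gap propagates there as well. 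Everything else in your outline matches the paper.
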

The paper is organized as follows. In Section \ref{sec:vv} we prove several a priori estimates on a vanishing viscosity approximation of \eqref{eq:integ}.
Those play a key role in the proof of our main result, that is given in Section \ref{sec:proof}

\section{Vanishing viscosity approximation}
\label{sec:vv}
Our existence argument is based on passing to the limit
in a vanishing viscosity approximation of \eqref{eq:integ}.

Fix a small number $\eps>0$, and let $\ue=\ue (t,x)$ be the unique classical solution of the following mixed problem
\begin{equation}
\label{eq:OHepsw}
\begin{cases}
\pt \ue+3\ue^2\px \ue=\Pe+ \eps\pxx\ue,&\quad t>0,\ x>0,\\
\px\Pe=\ue,&\quad t>0,\ x>0,\\
\ue(t,0)=\ges(t),&\quad t>0,\\
\Pe(t,0)=0,&\quad t>0,\\
\ue(0,x)=u_{0,\eps}(x),&\quad x>0,
\end{cases}
\end{equation}
where $u_{\eps,0}$ and $\ges$ are $C^\infty(0,\infty)$ approximations of $u_{0}$ and $g$ such that
\begin{equation}
\begin{split}
\label{eq:u0eps}
& u_{0,\eps}\to u_{0},\quad \text{a.e. and in $L^p(0,\infty),\, 1\leq p< \infty,$},\\
&P_{0,\eps}\to P_{0},\quad \text{in $L^2(0,\infty)$},\\
&\ges\to g,\quad \text{a.e. and in $L^p_{loc}(0,\infty),\, 1\leq p< \infty$},\\
&\norm{u_{\eps,0}}_{L^{\infty}(0,\infty)}\le \norm{u_0}_{L^{\infty}(0,\infty)}, \quad \norm{u_{\eps,0}}_{L^2(0,\infty)}\le \norm{u_0}_{L^2(0,\infty)},\\
&\norm{u_{\eps,0}}_{L^{4}(0,\infty)}\le \norm{u_0}_{L^{4}(0,\infty)}, \quad  \int_{0}^{\infty} u_{\eps,0}(x) dx =0,\\
&\norm{P_{\eps,0}}_{L^2(0,\infty)}\le \norm{P_0}_{L^2(0,\infty)}, \quad \norm{\ges}_{L^{\infty}(0,\infty)}\le C_0,
\end{split}
\end{equation}
and $C_0$ is a constant independent on $\eps$.

Clearly, \eqref{eq:OHepsw} is equivalent to the integro-differential problem
\begin{equation}
\label{eq:OHepswint}
\begin{cases}
\pt \ue+3\ue^2\px\ue=\int_0^x \ue (t,y)dy+ \eps\pxx\ue,&\quad t>0,\ x>0 ,\\
\ue(t,0)=\ges(t),&\quad t>0,\\
\ue(0,x)=u_{\eps,0}(x),&\quad x>0.
\end{cases}
\end{equation}

%The existence of such solutions can be obtained by fixing a small number $\delta>0$ and considering the further approximation of \eqref{eq:OHepsw}
%(see \cite{CHK:ParEll})
%\begin{equation*}
%\begin{cases}
%\pt u_{\eps,\delta}+\px f(u_{\eps,\delta})=\gamma P_{\eps,\delta}+
%\eps\pxx u_{\eps,\delta},&\quad t>0,\ x>0 ,\\
%-\delta\pxx P_{\eps,\delta}+\px P_{\eps,\delta}=u_{\eps,\delta},&\quad
%t>0,\ x>0,\\
%u_{\eps,\delta}(t,0)=P_{\eps,\delta}(t,0)=\px P_{\eps,\delta}(t,0)=0,&\quad t>0,\\
%\int_0^1\Pe(t,x)dx=0,&\quad t>0,\\
%u_{\eps,\delta}(0,x)=u_{\eps,0}(x),&\quad x>0,
%\end{cases}
%\end{equation*}
%and then sending $\delta\to0$.
Let us prove some a priori estimates on $\ue$ and $\Pe$, denoting with $C_0$ the constants which depend only on the initial data, and $C(T)$ the constants which depend also on $T$.

Arguing as \cite[Lemma $1$]{CdK}, or \cite[Lemma $2.2.1$]{dR}, we have the following result.
\begin{lemma}
\label{lm:cns}
The following statements are equivalent
\begin{align}
\label{eq:con-u}
\int_{0}^{\infty} \ue(t,x) dx &=0, \quad t\ge 0,\\
\label{eq:120}
\frac{d}{dt}\int_0^{\infty} \ue^2dx + 2\eps\int_{0}^{\infty}(\px\ue)^2 dx&=\frac{3}{2}\ges^4(t)+2\eps\ges(t)\px\ue(t,0),\quad t>0.
\end{align}
\end{lemma}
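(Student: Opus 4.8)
The plan is to show the two statements are equivalent by establishing a single differential identity and then reading it in two directions. First I would compute $\frac{d}{dt}\int_0^\infty \ue\,dx$ directly from the viscous equation \eqref{eq:OHepsw}. Integrating the PDE $\pt\ue = -3\ue^2\px\ue + \Pe + \eps\pxx\ue = -\px(\ue^3) + \Pe + \eps\pxx\ue$ over $x\in(0,\infty)$ and using the decay of $\ue$, $\px\ue$ and (via \eqref{eq:OHsmooth}-type bounds) the growth of $\Pe$ at infinity together with the boundary conditions $\ue(t,0)=\ges(t)$, $\Pe(t,0)=0$, one gets
\begin{equation*}
\frac{d}{dt}\int_0^\infty \ue\,dx = \ges^3(t) + \int_0^\infty \Pe\,dx + \eps\,\px\ue(t,0).
\end{equation*}
The term $\int_0^\infty \Pe\,dx$ must be rewritten: since $\px\Pe = \ue$, an integration by parts gives $\int_0^\infty \Pe\,dx = [x\Pe]_0^\infty - \int_0^\infty x\ue\,dx$, which is not obviously helpful, so instead I would note that $\int_0^\infty\Pe\,dx$ can be related back to $\int_0^\infty\ue\,dx$ only under additional structure; the cleaner route is to observe that $\Pe(t,x)=\int_0^x\ue(t,y)\,dy$, so $\Pe(t,\infty)=\int_0^\infty\ue(t,x)\,dx$, and hence the quantity $m(t):=\int_0^\infty\ue(t,x)\,dx$ satisfies a closed relation once one also tracks $\frac{d}{dt}\int_0^\infty\ue^2\,dx$.

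Accordingly, the second computation I would carry out is $\frac{d}{dt}\int_0^\infty\ue^2\,dx$: multiply the PDE by $2\ue$ and integrate. The convection term $2\ue\cdot 3\ue^2\px\ue = \frac{3}{2}\px(\ue^4)$ integrates to $-\frac{3}{2}\ges^4(t)$ (boundary term at $x=0$, nothing at $\infty$); the viscous term $2\eps\int_0^\infty\ue\pxx\ue\,dx = -2\eps\int_0^\infty(\px\ue)^2\,dx - 2\eps\ges(t)\px\ue(t,0)$ after one integration by parts; and the source term contributes $2\int_0^\infty\ue\Pe\,dx = 2\int_0^\infty\px\Pe\cdot\Pe\,dx = [\Pe^2]_0^\infty = \Pe(t,\infty)^2 = m(t)^2$, using $\Pe(t,0)=0$. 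Collecting,
\begin{equation*}
\frac{d}{dt}\int_0^\infty\ue^2\,dx + 2\eps\int_0^\infty(\px\ue)^2\,dx = \frac{3}{2}\ges^4(t) + 2\eps\ges(t)\px\ue(t,0) + m(t)^2.
\end{equation*}
Comparing with \eqref{eq:120}, the two are equivalent precisely when $m(t)^2 = 0$, i.e. when $m(t)=0$. This is the heart of the equivalence: \eqref{eq:120} holds for $t>0$ if and only if $m(t)\equiv 0$, and since by \eqref{eq:u0eps} we have $m(0)=\int_0^\infty u_{\eps,0}\,dx = 0$, the continuity of $m$ makes the pointwise condition $m(t)=0$ for $t\ge 0$ equivalent to \eqref{eq:120} for $t>0$. (In the forward direction, \eqref{eq:con-u} gives $m\equiv 0$ hence \eqref{eq:120}; in the reverse direction, \eqref{eq:120} forces $m(t)^2=0$ for a.e.\ $t>0$, hence $m\equiv 0$ by continuity and $m(0)=0$.)

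The main obstacle I anticipate is not the algebra but the justification of the boundary-at-infinity terms: one needs enough decay of $\ue(t,\cdot)$, $\px\ue(t,\cdot)$, $\Pe(t,\cdot)$, $\ue^4(t,\cdot)$ as $x\to\infty$ to discard the boundary contributions in each integration by parts, and to know that $\Pe(t,\infty)$ is well-defined and equals $m(t)$. This is exactly where the hypotheses \eqref{eq:assinit} ($u_0\in L^1\cap L^\infty$), \eqref{eq:l-2-di-P0} ($P_0\in L^2$) and the matching bounds in \eqref{eq:u0eps} enter, propagated forward in time by parabolic regularity for the strictly parabolic problem \eqref{eq:OHepsw}; I would cite the construction of the classical solution $\ue$ (and the cited analogues \cite[Lemma 1]{CdK}, \cite[Lemma 2.2.1]{dR}) for the decay needed to make these manipulations rigorous, rather than reprove it here.
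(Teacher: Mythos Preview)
Your approach is essentially the paper's: multiply \eqref{eq:OHepsw} by $2\ue$, integrate by parts, recognize $2\int_0^\infty\ue\Pe\,dx=\Pe(t,\infty)^2=m(t)^2$, and read off that \eqref{eq:120} holds iff $m(t)=0$. The first paragraph (attempting $\frac{d}{dt}\int_0^\infty\ue\,dx$) is an unnecessary detour that the paper does not take and that you rightly drop; also, there is a sign slip between your integration by parts of the viscous term and your collected identity, though it does not affect the equivalence argument.
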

\begin{proof}
Let $t>0$. We begin by proving that \eqref{eq:con-u} implies \eqref{eq:120}. Multiplying \eqref{eq:OHepswint} by $\ue$, an integration on $(0,\infty)$ gives
\begin{align*}
\frac{d}{dt}\int_{0}^{\infty}\ue^2 dx =& 2\int_{0}^{\infty}\ue\pt\ue dx\\
=& 2\eps\int_{0}^{\infty}\ue\pxx\ue dx - 6\int_{0}^{\infty}\ue^3 \px\ue dx+2\int_{0}^{\infty}\ue \left(\int_{0}^x\ue dy \right)dx\\
=&2\eps\px\ue(t,0)\ges(t) - 2\eps\int_{0}^{\infty}(\px\ue)^2 dx +\frac{3}{2}\ges^4(t)+2\int_{0}^{\infty}\ue \left(\int_{0}^x\ue dy \right)dx.
\end{align*}
By \eqref{eq:OHepsw},
\begin{equation*}
2\int_{0}^{\infty}\ue \left(\int_{0}^x\ue dy \right)dx= 2\int_{0}^{\infty}\Pe\px\Pe dx=\Pe^2(t,\infty).
\end{equation*}
Then,
\begin{equation}
\label{eq:U12}
\frac{d}{dt}\int_{0}^{\infty}\ue^2 dx +2\eps\int_{0}^{\infty}(\px\ue)^2 dx= \Pe^2(t,\infty)+2\eps\px\ue(t,0)\ges(t) +\frac{3}{2}\ges^4(t).
\end{equation}
Thanks to \eqref{eq:con-u},
\begin{equation}
\label{eq:U13}
\lim_{x\to\infty}\Pe^2(t,x) =\left(\int_{0}^{\infty} \ue(t,x) dx\right)^2=0.
\end{equation}
\eqref{eq:U12} and \eqref{eq:U13} give \eqref{eq:120}.

Let us show that \eqref{eq:120} implies \eqref{eq:con-u}. We assume by contradiction that \eqref{eq:con-u} does not hold, namely:
\begin{equation*}
\int_{0}^{\infty}\ue(t,x) dx \neq 0.
\end{equation*}
For \eqref{eq:integ},
\begin{equation*}
\Pe^2(t,\infty)= \left(\int_{0}^{\infty}\ue(t,x) dx\right)^2 \neq 0.
\end{equation*}
Therefore, \eqref{eq:U12} gives
\begin{equation*}
\frac{d}{dt}\int_{0}^{\infty}\ue^2 dx +2\eps\int_{0}^{\infty}(\px\ue)^2 dx\neq 2\eps\px\ue(t,0)\ges(t) +\frac{3}{2}\ges^4(t),
\end{equation*}
which is in contradiction with \eqref{eq:120}.
\end{proof}
\begin{lemma}
\label{lm:l2-u}
For each $t \ge 0$, \eqref{eq:con-u} holds true.
In particular, we have that
\begin{equation}
\label{eq:l2-u}
\norm{\ue(t,\cdot)}_{L^2(0,\infty)}^2+2\eps \int_0^t \norm{\px \ue(s,\cdot)}^2_{L^2(0,\infty)}ds\le C_{0}(t+1) +2\eps\int_{0}^{t}\ges(t)\px\ue(t,0) ds.
\end{equation}
\end{lemma}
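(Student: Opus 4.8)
\emph{Plan.} The estimate \eqref{eq:l2-u} is just the time integral of \eqref{eq:120}, so the only substantial point is the conservation of zero mass \eqref{eq:con-u}; I would establish that first and then read off \eqref{eq:l2-u}.

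\emph{Step 1: proof of \eqref{eq:con-u}.} At $t=0$ this is contained in \eqref{eq:u0eps}. For $t>0$ I would avoid integrating the nonlocal source over all of $(0,\infty)$ — which is precisely what the lack of $L^1$--conservation forbids — and truncate instead. Put $\mu_X(t):=\int_0^X\ue(t,x)\,dx$ for $X>0$. Integrating \eqref{eq:OHepswint} on $(0,X)$, writing $3\ue^2\px\ue=\px(\ue^3)$, and using Fubini in the form $\int_0^X\big(\int_0^x\ue(t,y)\,dy\big)dx=X\mu_X(t)-\int_0^X y\,\ue(t,y)\,dy$, one gets
\[
\frac{d}{dt}\mu_X(t)=\ges^3(t)-\ue^3(t,X)+\eps\px\ue(t,X)-\eps\px\ue(t,0)+X\mu_X(t)-\int_0^X y\,\ue(t,y)\,dy .
\]
Integrate this in time over $(0,\tau)$ and solve for $X\int_0^\tau\mu_X(s)\,ds$. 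On the resulting right-hand side, $\mu_X(\tau)-\mu_X(0)$ is bounded by $2\sup_{[0,\tau]}\norm{\ue(s,\cdot)}_{L^1(0,\infty)}$, the $\ges^3$ and $\px\ue(\cdot,0)$ integrals do not depend on $X$, the boundary contributions $\int_0^\tau\ue^3(s,X)\,ds$ and $\eps\int_0^\tau\px\ue(s,X)\,ds$ tend to $0$ as $X\to\infty$ by the decay of the classical solution and dominated convergence, and $\int_0^\tau\int_0^X y\,\ue(s,y)\,dy\,ds=o(X)$ since $\frac1X\int_0^X y\,|\ue(s,y)|\,dy\to0$ by dominated convergence ($\ue(s,\cdot)\in L^1(0,\infty)$). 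Hence $X\int_0^\tau\mu_X(s)\,ds=o(X)$, i.e. $\int_0^\tau\mu_X(s)\,ds\to0$; on the other hand $\int_0^\tau\mu_X(s)\,ds\to\int_0^\tau\!\big(\int_0^\infty\ue(s,x)\,dx\big)ds$ by dominated convergence. Thus $\int_0^\tau\!\big(\int_0^\infty\ue(s,x)\,dx\big)ds=0$ for every $\tau\ge0$, and since $t\mapsto\int_0^\infty\ue(t,x)\,dx$ is continuous this forces \eqref{eq:con-u}. (The integrability and spatial decay of $\ue$ used here belong to the construction of the viscous approximation; this is the content of ``arguing as in \cite{CdK,dR}'', and is the same information that makes $\lim_{x\to\infty}\Pe^2(t,x)=\big(\int_0^\infty\ue\big)^2$ meaningful in the proof of Lemma~\ref{lm:cns}.)

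\emph{Step 2: proof of \eqref{eq:l2-u}.} By Step 1 and Lemma~\ref{lm:cns}, the identity \eqref{eq:120} holds for all $t>0$. Integrating it on $(0,t)$ gives
\[
\norm{\ue(t,\cdot)}_{L^2(0,\infty)}^2+2\eps\int_0^t\norm{\px\ue(s,\cdot)}_{L^2(0,\infty)}^2ds=\norm{u_{\eps,0}}_{L^2(0,\infty)}^2+\frac32\int_0^t\ges^4(s)\,ds+2\eps\int_0^t\ges(s)\px\ue(s,0)\,ds,
\]
and \eqref{eq:u0eps} yields $\norm{u_{\eps,0}}_{L^2(0,\infty)}^2\le\norm{u_0}_{L^2(0,\infty)}^2$ together with $\frac32\int_0^t\ges^4(s)\,ds\le\frac32 C_0^4\,t$, from which \eqref{eq:l2-u} follows with a constant $C_0$ depending only on the data.

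\emph{Main obstacle.} Step 1 is the delicate part: since \eqref{eq:SPE1} does not preserve the $L^1$ norm, $\Pe(t,\cdot)$ is not integrable and $\Pe(t,x)\to\int_0^\infty\ue(t,x)\,dx$ — exactly the quantity one wants to kill — so a naive ``integrate the equation in $x$'' is illegitimate. The truncation at $x=X$ together with the Fubini rewriting of the nonlocal term is what circumvents this, at the price of invoking the spatial decay of the viscous solution; everything after \eqref{eq:con-u} is bookkeeping.
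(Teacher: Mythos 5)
Your proof is correct, and Step 2 coincides with the paper's (integrate \eqref{eq:120} in time and use \eqref{eq:u0eps}), but your Step 1 establishes \eqref{eq:con-u} by a genuinely different route. The paper differentiates the integro-differential equation back to the local form $\px\left(\pt\ue+3\ue^2\px\ue-\eps\pxx\ue\right)=\ue$ and integrates over all of $(0,\infty)$: the boundary contribution at $x=0$ vanishes identically because evaluating \eqref{eq:OHepswint} at $x=0$ gives \eqref{eq:125}, and the contribution at $x=\infty$ is asserted to vanish by the decay of the smooth solution. Note that the latter assertion requires $\pt\ue(t,x)\to0$ as $x\to\infty$, which by the equation itself equals $\int_0^\infty\ue(t,y)\,dy$ up to the decaying terms $\ue^3$ and $\eps\px\ue$ --- so the paper's argument really amounts to postulating this decay as part of the qualitative properties of the viscous solution class. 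Your truncation-plus-Fubini argument buys something here: it only needs $\ue(s,\cdot)\in L^1(0,\infty)$ and the vanishing of $\ue^3(s,X)$ and $\eps\px\ue(s,X)$ as $X\to\infty$, never the decay of the time derivative, and it extracts the conclusion from the $o(X)$ balance rather than from a boundary evaluation. Both arguments lean on unproven decay/integrability of the classical solution of \eqref{eq:OHepsw} (as does the paper's use of $\Pe^2(t,\infty)$ in Lemma \ref{lm:cns}), so neither is fully self-contained, but yours makes the minimal hypotheses more transparent. One small bookkeeping point: to pass the limit $X\to\infty$ inside $\int_0^\tau\frac1X\int_0^X y\,\abs{\ue(s,y)}\,dy\,ds$ you should note that a dominating function uniform in $s\in[0,\tau]$ (e.g. $\sup_{s\le\tau}\norm{\ue(s,\cdot)}_{L^1(0,\infty)}<\infty$) is available for the outer integral as well; with that remark the argument is complete.
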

\begin{proof}
We begin by observing that $\pt\ue(t,0)=\ges'(t)$, being $\ue(t,0)=\ges(t)$. It follows from \eqref{eq:OHepswint} that
\begin{equation}
\begin{split}
\label{eq:125}
\eps\pxx\ue(t,0)=& \pt\ue(t,0) + 3\ue^2(t,0)\px\ue(t,0)-\int_{0}^{0}\ue(t,x)dx\\
=& \ges'(t) + 3\ges^2(t)\px\ue(t,0).
\end{split}
\end{equation}
Differentiating \eqref{eq:OHepswint} with respect to $x$, we have
\begin{equation*}
\px(\pt\ue + 3\ue^2\px\ue - \eps \pxx \ue)=\ue.
\end{equation*}
For \eqref{eq:125}, and being $\ue$ a smooth solution of \eqref{eq:OHepswint}, an integration over $(0,\infty)$ gives \eqref{eq:con-u}.
Lemma \ref{lm:cns} says that also \eqref{eq:120}  holds true.
Therefore, integrating \eqref{eq:120} on $(0,t)$, for \eqref{eq:u0eps}, we have
\begin{align*}
\norm{\ue(t,\cdot)}_{L^2(0,\infty)}^2&+2\eps \int_0^t \norm{\px \ue(s,\cdot)}^2_{L^2(0,\infty)}ds\\
\le& \norm{u_0}^2_{L^2(0,\infty)}+ \frac{3}{2}\int_{0}^{t}\ges^4(s)ds+2\eps\int_{0}^{t}\ges(s)\px\ue(s,0)ds\\
\le& \norm{u_0}^2_{L^2(0,\infty)}+ \frac{3}{2}\norm{\ges}^4_{L^{\infty}(0,\infty)}t+ 2\eps\int_{0}^{t}\ges(s)\px\ue(s,0)ds\\
\le & \norm{u_0}^2_{L^2(0,\infty)}+C_{0}t + 2\eps\int_{0}^{t}\ges(s)\px\ue(s,0)ds,
\end{align*}
which gives \eqref{eq:l2-u}.
\end{proof}

\begin{lemma}
We have that
\begin{equation}
\label{eq:lim-infty-f}
\lim_{x\to\infty}\Fe(t,x)=\int_{0}^{\infty}\Pe(t,x)dx= \eps\px\ue(t,0)-\ges^3(t),
\end{equation}
where
\begin{equation}
\label{eq:def-F}
\Fe(t,x)=\int_{0}^{x}\Pe(t,y)dy.
\end{equation}
\end{lemma}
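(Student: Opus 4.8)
The plan is to compute $\lim_{x\to\infty}\Fe(t,x)$ directly by identifying $\pt\Fe$ from the PDE and then integrating in time, or — more efficiently — by integrating the $\ue$-equation in \eqref{eq:OHepsw} twice in space. First I would integrate the first equation of \eqref{eq:OHepsw} over $(0,x)$ in the space variable. The left-hand side contributes $\pt\int_0^x\ue\dy + \left[\ue^3\right]_0^x = \pt\int_0^x\ue\dy + \ue^3(t,x) - \ges^3(t)$, while the right-hand side contributes $\Fe(t,x) + \eps\left(\px\ue(t,x) - \px\ue(t,0)\right)$. Thus one obtains the identity
\begin{equation*}
\Fe(t,x) = \pt\int_0^x\ue(t,y)\dy + \ue^3(t,x) - \ges^3(t) - \eps\px\ue(t,x) + \eps\px\ue(t,0).
\end{equation*}

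Next I would let $x\to\infty$. By Lemma \ref{lm:l2-u} we know $\int_0^\infty\ue(t,x)\dx = 0$ for every $t\ge 0$, so $\int_0^x\ue(t,y)\dy\to 0$ as $x\to\infty$, and (since this holds for all $t$, using the smoothness of $\ue$ and the fact that $\pt\ue$ is controlled) the time derivative $\pt\int_0^x\ue(t,y)\dy\to 0$ as well. Since $\ue(t,\cdot)\in L^2(0,\infty)$ with $\px\ue(t,\cdot)\in L^2(0,\infty)$ for a.e. $t$ (again from Lemma \ref{lm:l2-u}), the classical solution $\ue(t,\cdot)$ and its derivative decay at infinity, so $\ue^3(t,x)\to 0$ and $\eps\px\ue(t,x)\to 0$. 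Passing to the limit then yields
\begin{equation*}
\lim_{x\to\infty}\Fe(t,x) = -\ges^3(t) + \eps\px\ue(t,0),
\end{equation*}
which is exactly \eqref{eq:lim-infty-f}, and the middle expression $\int_0^\infty\Pe(t,x)\dx$ is just the definition \eqref{eq:def-F} of $\Fe$ evaluated in the limit.

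The main obstacle is the rigorous justification of the decay claims: that $\ue(t,x)$, $\px\ue(t,x)$, and the quantity $\pt\int_0^x\ue(t,y)\dy$ all vanish as $x\to\infty$. The $L^2$ bounds from Lemma \ref{lm:l2-u} give decay only along a subsequence a priori; to pin down the pointwise limit one should use either the known smoothness and integrability of the classical solution of the parabolic problem \eqref{eq:OHepsw} (so that $\ue(t,\cdot)$ and $\px\ue(t,\cdot)$ lie in spaces forcing decay at $+\infty$), or differentiate the $\Fe$-identity in $t$ and argue that the right-hand side has a limit. I would handle this by invoking the regularity of $\ue$ as a classical solution together with $\ue(t,\cdot), \px\ue(t,\cdot)\in L^2$, which is the standard way such vanishing-at-infinity statements are treated in this setting; the manipulation with $\int_0^0\ue\dx = 0$ in \eqref{eq:125} shows the boundary term $\eps\px\ue(t,0)$ is exactly what survives, consistent with the stated formula.
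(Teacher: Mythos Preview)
Your proposal is correct and follows essentially the same route as the paper: integrate the first equation of \eqref{eq:OHepsw} once over $(0,x)$ to obtain the identity relating $\Fe$, $\pt\Pe=\pt\int_0^x\ue\,dy$, $\ue^3$, and the boundary terms, then send $x\to\infty$ using \eqref{eq:con-u} (from Lemma~\ref{lm:l2-u}) to kill $\pt\Pe(t,\infty)$ and the regularity of the classical solution $\ue$ to kill $\ue^3(t,\infty)$ and $\eps\px\ue(t,\infty)$. The paper handles the decay issues you flag in exactly the way you anticipate, simply invoking ``the regularity of $\ue$'' for \eqref{eq:lim} without further elaboration.
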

\begin{proof}
We begin by observing that, integrating on $(0,x)$ the second equation of \eqref{eq:OHepsw}, we get
\begin{equation}
\label{eq:p1}
\Pe(t,x)= \int_{0}^{x}\ue(t,y)dy.
\end{equation}
Differentiating \eqref{eq:p1} with respect to $t$, we have
\begin{equation}
\label{eq:p2}
\pt\Pe(t,x)=\int_{0}^{x}\pt\ue(t,y)dy=\frac{d}{dt}\int_{0}^{x}\ue(t,y)dy.
\end{equation}
It follows from \eqref{eq:con-u} and \eqref{eq:p2} that
\begin{equation}
\label{eq:p3}
\lim_{x\to\infty}\pt\Pe(t,x)=\frac{d}{dt}\int_{0}^{\infty}\ue(t,x)dx=0.
\end{equation}
Integrating on $(0,x)$ the first equation of \eqref{eq:OHepsw}, thanks to \eqref{eq:p2}, we have
\begin{equation}
\label{eq:equa-in-P}
\pt\Pe(t,x) +\ue^3(t,x)- \ges^3(t)-\eps\px\ue(t,x) + \eps\px\ue(t,0)=\int_{0}^{x}\Pe(t,y)dy.
\end{equation}
It follows from the regularity of $\ue$ that
\begin{equation}
\label{eq:lim}
\lim_{x\to\infty}\left(\ue^3(t,x))-\eps\px\ue(t,x)\right)=0.
\end{equation}
\eqref{eq:p3} and \eqref{eq:lim} give \eqref{eq:lim-infty-f}.
\end{proof}
Arguing as in \cite[Lemma $2.3$]{Cd2}, we prove the following lemma.
\begin{lemma}
\label{lm:P-2u-4}
Let $T>0$. There exists a constant $C(T)>0$, independent on $\eps$, such that
\begin{equation}
\label{eq:u-4-p-2}
\begin{split}
\norm{\ue(t,\cdot)}^4_{L^{4}(0,\infty)}&+2\norm{\Pe(t,\cdot)}^2_{L^2(0,\infty)}\\
&+12\eps\int_{0}^{t}\norm{\ue(s,\cdot)\px\ue(s,\cdot)}^2_{L^2(0,\infty)}ds\\
&+4\eps\int_{0}^{t}\norm{\ue(s,\cdot)}_{L^2(0,\infty)}^2 ds +\eps^2\int_{0}^{t}\left(\px\ue(s,0)\right)^2 ds \le C(T),
\end{split}
\end{equation}
for every $0\le t\le T$.
\end{lemma}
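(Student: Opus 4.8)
The plan is to derive a single differential identity by combining a weighted energy estimate for $\ue$ with an $L^2$ estimate for $\Pe$, arranged so that the troublesome terms cancel. First I would multiply the first equation of \eqref{eq:OHepsw} by $4\ue^3$ and integrate over $(0,\infty)$. Since $12\ue^5\px\ue=2\px(\ue^6)$, $\ue(t,0)=\ges(t)$, and the viscous term integrates by parts as $4\eps\int_0^\infty\ue^3\pxx\ue\,dx=-4\eps\ges^3(t)\px\ue(t,0)-12\eps\int_0^\infty\ue^2(\px\ue)^2\,dx$ (using the decay of $\ue$ at $x=\infty$), this gives
\[
\frac{d}{dt}\norm{\ue(t,\cdot)}^4_{L^{4}(0,\infty)}+12\eps\int_{0}^{\infty}\ue^2(\px\ue)^2\,dx=2\ges^6(t)+4\int_{0}^{\infty}\ue^3\Pe\,dx-4\eps\ges^3(t)\px\ue(t,0).
\]
In parallel I would multiply \eqref{eq:equa-in-P} by $2\Pe$ and integrate; using $\px\Pe=\ue$, $\Pe(t,0)=0$, the limit $\int_0^\infty\Pe\,dx=\Fe(t,\infty)=\eps\px\ue(t,0)-\ges^3(t)$ and $2\int_0^\infty\Pe\Fe\,dx=\Fe^2(t,\infty)$ from \eqref{eq:lim-infty-f}, together with $\int_0^\infty\Pe\px\ue\,dx=-\norm{\ue(t,\cdot)}^2_{L^2(0,\infty)}$, the boundary and source terms reorganize into
\[
\frac{d}{dt}\norm{\Pe(t,\cdot)}^2_{L^2(0,\infty)}=-2\int_{0}^{\infty}\Pe\ue^3\,dx-(\eps\px\ue(t,0)-\ges^3(t))^2-2\eps\norm{\ue(t,\cdot)}^2_{L^2(0,\infty)}.
\]

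Adding the first identity to twice the second, the terms $\pm 4\int_0^\infty\ue^3\Pe\,dx$ cancel, and expanding $-2(\eps\px\ue(t,0)-\ges^3(t))^2$ exactly cancels the remaining boundary contributions $2\ges^6(t)$ and $-4\eps\ges^3(t)\px\ue(t,0)$, leaving
\[
\frac{d}{dt}\Big(\norm{\ue}^4_{L^4}+2\norm{\Pe}^2_{L^2}\Big)+12\eps\int_{0}^{\infty}\ue^2(\px\ue)^2\,dx+4\eps\norm{\ue}^2_{L^2}+2\eps^2\left(\px\ue(t,0)\right)^2=0.
\]
Integrating on $(0,t)$ and using $\norm{u_{\eps,0}}_{L^{4}}\le\norm{u_0}_{L^{4}}$ and $\norm{P_{\eps,0}}_{L^2}\le\norm{P_0}_{L^2}$ from \eqref{eq:u0eps} yields \eqref{eq:u-4-p-2} (in fact with a constant independent of $T$). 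If one prefers not to rely on the exact cancellation of the $\ges$-terms, Young's inequality absorbs $4\eps\ges^3(t)\px\ue(t,0)$ into part of the good term $2\eps^2(\px\ue(t,0))^2$, at the cost of an extra $\int_0^t\ges^6(s)\,ds\le\norm{\ges}^6_{L^\infty(0,\infty)}T$, which produces the stated $C(T)$.

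The hard part will be justifying that the boundary terms at $x=\infty$ in the integrations by parts genuinely vanish — that is, that $\ue^6$, $\ue^3\px\ue$ and $\Pe\ue$ decay as $x\to\infty$ — since this must be available before the estimate can be closed. This is precisely where the argument relies on the smoothness and spatial decay of the classical solution $\ue$, on \eqref{eq:p1} together with $\Pe(t,\cdot)\in L^\infty$, and crucially on the zero-mean identity \eqref{eq:con-u} from Lemma \ref{lm:l2-u} and the limit \eqref{eq:lim-infty-f}; the remainder is careful bookkeeping of the $x=0$ boundary terms so that the right-hand side comes out nonpositive.
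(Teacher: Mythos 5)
Your proof is correct and follows essentially the same route as the paper: the same two energy identities (multiplying the $u$-equation by a multiple of $\ue^3$ and the $P$-equation by $\Pe$), combined so that $\int_0^\infty\ue^3\Pe\,dx$ cancels, with \eqref{eq:lim-infty-f} handling the terms involving $\int_0^\infty\Pe\,dx$. The only difference is that you track the sign of the viscous boundary contribution $2\eps\left[\ue^3\px\ue\right]_{x=0}$ so that the $\ges$-terms cancel exactly, whereas the paper keeps a residual $4\eps\px\ue(t,0)\ges^3(t)$ and absorbs it by Young's inequality --- which is precisely the fallback you describe at the end.
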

\begin{proof}
Let $0\le t\le T$. We begin by observing that \eqref{eq:def-F} and \eqref{eq:equa-in-P} imply
\begin{equation}
\label{eq:p5}
\pt\Pe(t,x)= \Fe(t,x)-\ue^3(t,x)+\ges^3(t)+\eps\ue(t,x)-\eps\px\ue(t,0).
\end{equation}
Multiplying \eqref{eq:p5} by $\Pe$, an integration on $(0,\infty)$ gives
\begin{equation}
\label{eq:p8}
\begin{split}
\frac{d}{dt}\int_{0}^{\infty}\Pe^2 dx=&2\int_{0}^{\infty}\Pe\pt\Pe dx\\
=&2\int_{0}^{\infty}\Pe\Fe dx -2\int_{0}^{\infty}\ue^3\Pe dx+2\ges^3(t)\int_{0}^{\infty}\Pe dx\\
&+2\eps\int_{0}^{\infty}\px\ue\Pe dx -2\eps\px\ue(t,0)\int_{0}^{\infty}\Pe dx.
\end{split}
\end{equation}
By \eqref{eq:OHepsw},
\begin{equation}
\label{eq:p9}
2\int_{0}^{\infty}\px\ue\Pe dx = -2\eps\int_{0}^{\infty}\ue\px\Pe dx= -2\eps\norm{\ue(t,\cdot)}_{L^2(0,\infty)}^2
\end{equation}
while, in light of \eqref{eq:def-F} and \eqref{eq:lim-infty-f},
\begin{equation}
\label{eq:10}
\begin{split}
2\int_{0}^{\infty}\Pe\Fe dx =& 2\int_{0}^{\infty}\Fe\px\Fe dx \\
=&\Fe^2(t,\infty)= \left(\eps\px\ue(t,0)-\ges^3(t)\right)^2\\
=&\eps^2\left(\px\ue(t,0)\right)^2- 2\eps\px\ue(t,0)\ges^3(t)+ \ges^6(t).
\end{split}
\end{equation}
Using again \eqref{eq:lim-infty-f},
\begin{equation}
\label{eq:p11}
\begin{split}
-2\eps\px\ue(t,0)\int_{0}^{\infty}\Pe dx=& -2\eps^2\left(\px\ue(t,0)\right)^2 + 2\eps\px\ue(t,0)\ges^3(t),\\
2\ges^3(t)\int_{0}^{\infty}\Pe dx=& 2\eps\px\ue(t,0)\ges^3(t)-2\ges^6(t).
\end{split}
\end{equation}
\eqref{eq:p8}, \eqref{eq:p9}, \eqref{eq:10} and \eqref{eq:p11} give
\begin{align*}
\frac{d}{dt}\norm{\Pe(t,\cdot)}^2_{L^2(0,\infty)}=& -2\eps\norm{\ue(t,\cdot)}_{L^2(0,\infty)}^2 -2\int_{0}^{\infty}\ue^3\Pe dx\\
&-\eps^2\left(\px\ue(t,0)\right)^2 - \ges^6(t)+ 2\eps\px\ue(t,0)\ges^3(t),
\end{align*}
that is,
\begin{equation}
\label{eq:p15}
\begin{split}
\frac{d}{dt}\norm{\Pe(t,\cdot)}^2_{L^2(0,\infty)}&+2\eps\norm{\ue(t,\cdot)}_{L^2(0,\infty)}^2+\eps^2\left(\px\ue(t,0)\right)^2\\
=& -2\int_{0}^{\infty}\ue^3\Pe dx - \ges^6(t)+ 2\eps\px\ue(t,0)\ges^3(t).
\end{split}
\end{equation}
Multiplying \eqref{eq:OHepsw} by $2\ue^3$, an integration on $(0,\infty)$ gives
\begin{align*}
\frac{d}{dt}\left(\frac{1}{2}\int_{0}^{\infty}\ue^4 dx\right)=&2\int_{0}^{\infty}\ue^3\pt\ue dx\\
=&-6\int_{0}^{\infty}\ue^5 \px\ue dx +2\int_{0}^{\infty}\ue^3\Pe dx + 2\eps\int_{0}^{\infty}\ue^3\pxx\ue dx\\
=&\ges^6(t)+2\int_{0}^{\infty}\ue^3\Pe dx+ 2 \eps\px\ue(t,0)\ges^3(t)-6\eps\int_{0}^{\infty}\ue^2(\px\ue)^2dx,
\end{align*}
that is
\begin{equation}
\label{eq:u1}
\begin{split}
\frac{d}{dt}\left(\frac{1}{2}\norm{\ue(t,\cdot)}^4_{L^{4}(0,\infty)}\right)&+ 6\eps\norm{\ue(t,\cdot)\px\ue(t,\cdot)}^2_{L^2(0,\infty)}\\
&=\ges^6(t)+2\int_{0}^{\infty}\ue^3\Pe dx+ 2 \eps\px\ue(t,0)\ges^3(t).
\end{split}
\end{equation}
Adding \eqref{eq:p15} and \eqref{eq:u1}, we get
\begin{equation}
\label{eq:u2}
\begin{split}
&\frac{d}{dt}\left(\frac{1}{2}\norm{\ue(t,\cdot)}^4_{L^{4}(0,\infty)}+ \norm{\Pe(t,\cdot)}^2_{L^2(0,\infty)}\right)\\
&\quad + 6\eps\norm{\ue(t,\cdot)\px\ue(t,\cdot)}^2_{L^2(0,\infty)}+ 2\eps\norm{\ue(t,\cdot)}_{L^2(0,\infty)}^2\\
&\quad +\eps^2\left(\px\ue(t,0)\right)^2 = 4\eps\px\ue(t,0)\ges^3(t).
\end{split}
\end{equation}
Due to the Young inequality,
\begin{equation}
\label{eq:you1}
4\eps\px\ue(t,0)\ges^3(t)\le\left\vert\eps\px\ue(t,0)\right\vert\left\vert 4\ges^3(t)\right\vert\le \frac{\eps^2}{2}\left(\px\ue(t,0)\right)^2+8\ges^6(t).
\end{equation}
It follows from \eqref{eq:u2} and  \eqref{eq:you1} that
\begin{equation}
\label{eq:u3}
\begin{split}
&\frac{d}{dt}\left(\frac{1}{2}\norm{\ue(t,\cdot)}^4_{L^{4}(0,\infty)}+ \norm{\Pe(t,\cdot)}^2_{L^2(0,\infty)}\right)\\
&\quad + 6\eps\norm{\ue(t,\cdot)\px\ue(t,\cdot)}^2_{L^2(0,\infty)}+ 2\eps\norm{\ue(t,\cdot)}_{L^2(0,\infty)}^2\\
&\quad +\frac{\eps^2}{2}\left(\px\ue(t,0)\right)^2 \le 8\ges^6(t).
\end{split}
\end{equation}
Integrating \eqref{eq:u3} on $(0,t)$, by \eqref{eq:u0eps}, we have
\begin{align*}
&\frac{1}{2}\norm{\ue(t,\cdot)}^4_{L^{4}(0,\infty)}+ \norm{\Pe(t,\cdot)}^2_{L^2(0,\infty)}+6\eps\int_{0}^{t}\norm{\ue(s,\cdot)\px\ue(s,\cdot)}^2_{L^2(0,\infty)}ds\\
&\quad +2\eps\int_{0}^{t}\norm{\ue(s,\cdot)}_{L^2(0,\infty)}^2 ds + \frac{\eps^2}{2}\int_{0}^{t}\left(\px\ue(s,0)\right)^2 ds \\
&\qquad \le  \norm{u_0}^4_{L^{4}(0,\infty)}+ \norm{P_0}^2_{L^2(0,\infty)}+8\int_{0}^{t}\ges^6(s)ds\\
&\qquad \le C_{0} + 8\norm{\ges}^6_{L^{\infty}(0,\infty)}t\le C_{0}\left(1+8t\right),
\end{align*}
which gives \eqref{eq:u-4-p-2}.
\end{proof}
\begin{lemma}
\label{lm:l2-u-1}
Let $T>0$. There exists a constant $C(T)>0$, independent on $\eps$, such that
\begin{equation}
\label{eq:l2-u-1}
\norm{\ue(t,\cdot)}_{L^2(0,\infty)}^2+2\eps \int_0^t \norm{\px \ue(s,\cdot)}^2_{L^2(0,\infty)}ds\le C(T),
\end{equation}
for every $0\le t\le T$.
In particular, we have
\begin{equation}
\label{eq:p-infty}
\norm{\Pe}_{L^{\infty}((0,T)\times(0,\infty))}\le C(T).
\end{equation}
\end{lemma}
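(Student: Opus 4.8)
The estimate \eqref{eq:l2-u-1} will follow by feeding the output of Lemma \ref{lm:P-2u-4} back into Lemma \ref{lm:l2-u}. Recall that \eqref{eq:l2-u} already gives
\[
\norm{\ue(t,\cdot)}_{L^2(0,\infty)}^2+2\eps \int_0^t \norm{\px \ue(s,\cdot)}^2_{L^2(0,\infty)}ds\le C_{0}(t+1) +2\eps\int_{0}^{t}\ges(s)\px\ue(s,0)\, ds,
\]
so the only thing left to do is to control the boundary term on the right uniformly in $\eps$. The plan is to apply the Young inequality in the form
\[
2\eps\abs{\ges(s)}\,\abs{\px\ue(s,0)}\le \eps^2\left(\px\ue(s,0)\right)^2+\ges^2(s),
\]
integrate over $s\in(0,t)$, and then invoke \eqref{eq:u-4-p-2}, which bounds $\eps^2\int_0^t\left(\px\ue(s,0)\right)^2ds$ by $C(T)$, together with the uniform bound $\norm{\ges}_{L^\infty(0,\infty)}\le C_0$ from \eqref{eq:u0eps} for the remaining integral. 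This yields $2\eps\int_{0}^{t}\ges(s)\px\ue(s,0)\,ds\le C(T)+C_0 t\le C(T)$, and \eqref{eq:l2-u-1} is immediate for $0\le t\le T$.

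For the uniform $L^\infty$ bound \eqref{eq:p-infty} the point is that the naive estimate $\abs{\Pe(t,x)}\le\sqrt{x}\,\norm{\ue(t,\cdot)}_{L^2(0,\infty)}$ coming from \eqref{eq:p1} is useless as $x\to\infty$; instead I would use that $\Pe(t,\cdot)$ and $\px\Pe(t,\cdot)=\ue(t,\cdot)$ are both bounded in $L^2(0,\infty)$, uniformly in $\eps$ and in $t\in[0,T]$ — the first by \eqref{eq:u-4-p-2}, the second by \eqref{eq:l2-u-1} just established. Since $\Pe(t,0)=0$, for every $x>0$
\[
\Pe^2(t,x)=2\int_{0}^{x}\Pe(t,y)\px\Pe(t,y)\,dy\le \norm{\Pe(t,\cdot)}_{L^2(0,\infty)}^2+\norm{\ue(t,\cdot)}_{L^2(0,\infty)}^2\le C(T),
\]
and taking the supremum over $x>0$ and $t\in[0,T]$ gives \eqref{eq:p-infty}.

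Since every ingredient is already available from the previous lemmas, there is no real obstacle here; the only points requiring a little care are to not try to absorb the boundary term $\eps\px\ue(t,0)$ on the left-hand side but rather to dominate it by the $\eps^2$-weighted spacetime bound of Lemma \ref{lm:P-2u-4}, and to not expect \eqref{eq:p-infty} to follow from the $L^2$ bound on $\ue$ alone — both $\norm{\Pe}_{L^2}$ and $\norm{\px\Pe}_{L^2}=\norm{\ue}_{L^2}$ are needed.
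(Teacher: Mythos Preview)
Your proof is correct and follows essentially the same route as the paper: Young's inequality on the boundary term combined with the $\eps^2\int_0^t(\px\ue(s,0))^2\,ds$ bound from Lemma~\ref{lm:P-2u-4} for \eqref{eq:l2-u-1}, and the identity $\Pe^2(t,x)=2\int_0^x\Pe\,\px\Pe\,dy$ together with the $L^2$ bounds on $\Pe$ and $\ue$ for \eqref{eq:p-infty}. The only cosmetic difference is that the paper bounds $2\int_0^\infty|\Pe||\ue|\,dx$ by $2\norm{\Pe}_{L^2}\norm{\ue}_{L^2}$ via H\"older rather than by $\norm{\Pe}_{L^2}^2+\norm{\ue}_{L^2}^2$ as you do.
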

\begin{proof}
We begin by observing that, using the Young inequality,
\begin{equation*}
2\eps\ges(t)\px\ue(t,0)\le 2 \left \vert \ges(t) \right\vert\left\vert \eps \px\ue(t,0)\right\vert \le \ges^2(t) + \eps^2 \left(\px\ue(t,0)\right)^2.
\end{equation*}
Therefore, in light of  \eqref{eq:u0eps} and \eqref{eq:u-4-p-2},
\begin{equation}
\label{eq:you2}
\begin{split}
2\eps\int_{0}^{t}\ges(s)\px\ue(s,0)ds\le& 2\int_{0}^{t}\left\vert\ges(t)\right\vert\left\vert\eps\px\ue(t,0)\right\vert dx\\
\le & \int_{0}^{t}\ges^2(s)ds + \eps^2 \int_{0}^{t}\left(\px\ue(s,0)\right)^2 ds\\
\le & \norm{\ges}^2_{L^{\infty}(0,\infty)} t + \eps^2 \int_{0}^{t}\left(\px\ue(s,0)\right)^2 ds\\
\le & C_{0}t  + \eps^2 \int_{0}^{t}\left(\px\ue(s,0)\right)^2 ds \le C(T).
\end{split}
\end{equation}
\eqref{eq:l2-u-1} follows from \eqref{eq:l2-u} and \eqref{eq:you2}.

Finally, we prove \eqref{eq:p-infty}. Due to \eqref{eq:OHepsw}, \eqref{eq:u-4-p-2}, \eqref{eq:l2-u-1} and the H\"older inequality,
\begin{align*}
\Pe^2(t,x)=2\int_{0}^{x}\Pe\px\Pe dy \le& 2 \int_{0}^{\infty}\vert \Pe\vert \vert\px\Pe\vert dx\\
\le& 2\norm{\Pe(t,\cdot)}_{L^2(0,\infty)}\norm{\px\Pe(t,\cdot)}_{L^2(0,\infty)}\\
=&2 \norm{\Pe(t,\cdot)}_{L^2(0,\infty)}\norm{\ue(t,\cdot)}_{L^2(0,\infty)}\le C(T).
\end{align*}
Therefore,
\begin{equation*}
\vert \Pe(t,x)\vert \le C(T),
\end{equation*}
which gives \eqref{eq:p-infty}.
\end{proof}
\begin{lemma}
\label{lm:linfty-u}
Let $T>0$. We have 
\begin{equation}
\label{eq:linfty-u}
\norm{\ue}_{L^\infty((0,T)\times(0,\infty))}\le\norm{u_0}_{L^\infty(0,\infty)}+C(T).
\end{equation}
\end{lemma}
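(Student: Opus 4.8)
The plan is to derive an $L^\infty$ bound for $\ue$ by the method of characteristics applied to the first equation of \eqref{eq:OHepsw}, treating $\Pe$ as a known forcing term that we have already controlled in $L^\infty$ by \eqref{eq:p-infty}. The key point is that $\ue$ solves a viscous conservation law $\pt\ue + 3\ue^2\px\ue = \Pe + \eps\pxx\ue$ with a source that is uniformly bounded on $(0,T)\times(0,\infty)$, together with the boundary data $\ue(t,0)=\ges(t)$ (bounded by $C_0$ via \eqref{eq:u0eps}) and the initial data $u_{\eps,0}$ (bounded by $\norm{u_0}_{L^\infty}$ via \eqref{eq:u0eps}).

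First I would set $M(t) = \norm{\ue(t,\cdot)}_{L^\infty(0,\infty)}$ and argue via a maximum-principle / comparison argument on a fixed parabolic cylinder. Concretely, fix $T>0$ and consider the function $\Phi(t,x) = \norm{u_0}_{L^\infty(0,\infty)} + C_0 + \norm{\Pe}_{L^\infty((0,T)\times(0,\infty))}\, t$; one checks that $\Phi$ is a supersolution of the equation for $\ue$ (since $\pt\Phi = \norm{\Pe}_{L^\infty}\ge \Pe$, $\px\Phi = \pxx\Phi = 0$), that $\Phi(0,x)\ge u_{\eps,0}(x)$, and that $\Phi(t,0)\ge \ges(t)$; similarly $-\Phi$ is a subsolution lying below $\ue$ on the parabolic boundary. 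Since the nonlinearity $3\ue^2\px\ue$ is locally Lipschitz in $\ue$ for smooth solutions, the comparison principle for the (uniformly parabolic, as $\eps>0$ is fixed) equation yields $-\Phi\le \ue\le\Phi$ on $(0,T)\times(0,\infty)$, hence $\norm{\ue}_{L^\infty((0,T)\times(0,\infty))}\le \norm{u_0}_{L^\infty(0,\infty)} + C_0 + C(T)$, which is the claimed \eqref{eq:linfty-u} after absorbing the $\eps$-independent constants into $C(T)$.

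Alternatively, and perhaps more in the spirit of the preceding lemmas, one may avoid an abstract comparison theorem and instead test the equation with $(\ue - k)_+$ for $k = \norm{u_0}_{L^\infty} + C_0 + \norm{\Pe}_{L^\infty}t$, integrate by parts (the boundary term at $x=0$ has the right sign since $\ue(t,0)=\ges(t)\le k$, and the convective term $\int 3\ue^2\px\ue (\ue-k)_+$ integrates to a boundary contribution that vanishes or has a favourable sign), use the viscous term's dissipativity, and conclude by a Gronwall-type inequality that $\int_0^\infty (\ue - k)_+^2\,dx = 0$; the symmetric argument with $(\ue+k)_-$ completes the estimate.

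The main obstacle is the unboundedness of the spatial domain and the presence of the boundary at $x=0$: one must make sure the maximum is not attained "at infinity" and that the boundary term at $x=0$ does not spoil the sign. The decay built into the a priori estimates — in particular $\ue(t,\cdot)\in L^2(0,\infty)$ uniformly from \eqref{eq:l2-u-1} and the regularity of the classical solution, which forces $\ue(t,x)\to 0$ as $x\to\infty$ — handles the first issue, so that any positive maximum of $\ue-\Phi$ is attained either at $t=0$, at $x=0$, or at an interior point where the parabolic inequality applies. The boundary at $x=0$ is handled precisely because $\Phi(t,0)=\norm{u_0}_{L^\infty}+C_0+\norm{\Pe}_{L^\infty}t \ge C_0 \ge \ges(t)$, using the uniform bound $\norm{\ges}_{L^\infty(0,\infty)}\le C_0$ from \eqref{eq:u0eps}. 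Once these two points are secured, the estimate is routine.
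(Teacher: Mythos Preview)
Your proposal is correct and follows essentially the same route as the paper: both build the spatially constant barrier $\Phi(t)=\norm{u_0}_{L^\infty}+C(T)t$ (the paper writes $\mathcal{F}(t)$) using the $L^\infty$ bound on $\Pe$ from \eqref{eq:p-infty}, and invoke the parabolic comparison principle to trap $\ue$ between $\pm\Phi$. If anything, you are more careful than the paper, which does not explicitly check the boundary compatibility $\ges(t)\le\mathcal{F}(t)$ at $x=0$ nor discuss why the maximum cannot escape to $x=\infty$; your inclusion of the extra $C_0$ in $\Phi$ and your remark on the $L^2$ decay address exactly these points.
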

\begin{proof}
Due to \eqref{eq:OHepsw} and \eqref{eq:p-infty},
\begin{equation*}
\pt \ue +3\ue^2\px\ue-\eps\pxx \ue\le C(T).
\end{equation*}
Since the map
\begin{equation*}
{\mathcal F}(t):=\norm{u_0}_{L^\infty(0,\infty)}+ C(T)t,
\end{equation*}
solves the equation
\begin{equation*}
\frac{d{\mathcal F}}{dt}= C(T)
\end{equation*}
and
\begin{equation*}
\max\{\ue(0,x),0\}\le {\mathcal F}(t),\qquad (t,x)\in (0,T)\times (0,\infty),
\end{equation*}
the comparison principle for parabolic equations implies that
\begin{equation*}
 \ue(t,x)\le {\mathcal F}(t),\qquad (t,x)\in (0,T)\times (0,\infty).
\end{equation*}
In a similar way we can prove that
\begin{equation*}
\ue(t,x)\ge -{\mathcal F}(t),\qquad (t,x)\in (0,T)\times (0,\infty).
\end{equation*}
Therefore,
\begin{equation*}
\vert\ue(t,x)\vert\le\norm{u_0}_{L^\infty(0,\infty)}+ C(T)t\le  \norm{u_0}_{L^\infty(0,\infty)}+ C(T)T,
\end{equation*}
which gives \eqref{eq:linfty-u}.
\end{proof}

\section{Proof of Theorem \ref{th:main}}
\label{sec:proof}
This section is devoted to the proof of Theorem \ref{th:main}.

Let us begin by proving the existence of  a distributional solution
to  \eqref{eq:SPE1}, \eqref{eq:boundary}, \eqref{eq:init}  satisfying \eqref{eq:ent2}.
\begin{lemma}\label{lm:conv}
Let $T>0$. There exists a function $u\in L^{\infty}((0,T)\times (0,\infty))$ that is a distributional
solution of \eqref{eq:integ} and satisfies  \eqref{eq:ent2}.
\end{lemma}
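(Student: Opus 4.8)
The plan is to obtain $u$ as a strong $\Lenloc$ limit of a subsequence of the viscous approximations $\ue$ constructed in Section \ref{sec:vv}, and then pass to the limit in the weak/entropy formulation. First I would fix $T>0$ and collect the uniform bounds already proved: $\norm{\ue}_{L^\infty((0,T)\times(0,\infty))}\le C(T)$ from Lemma \ref{lm:linfty-u}, $\norm{\Pe}_{L^\infty((0,T)\times(0,\infty))}\le C(T)$ from \eqref{eq:p-infty}, together with $\px\Pe=\ue$, which shows $\seq{\Pe}$ is bounded in $L^\infty(0,T;W^{1,\infty}(0,\infty))$ and equicontinuous in $x$. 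By a diagonal argument over an increasing sequence of bounded space-time boxes we may extract $\epn\to 0$ with $\Pen\to P$ locally uniformly, hence $\ue\weakstar u$ in $L^\infty_{\loc}$ along a subsequence with $\px P=u$ in $\D'$ and $P(t,0)=0$.

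The heart of the argument is upgrading the weak convergence of $\ue$ to strong $\Lenloc$ convergence; I would do this by the compensated-compactness / Tartar $\Hneg$ method adapted to the source term, exactly as in the companion papers \cite{CdK}, \cite{dR}, \cite{Cd2} cited in Section \ref{sec:vv}. Concretely, for any convex $\eta\in C^2$ with flux $q(u)=\int^u 3\xi^2\eta'(\xi)\dzeta$, multiplying the viscous equation by $\eta'(\ue)$ gives
\begin{equation*}
\pt\eta(\ue)+\px q(\ue)=\eta'(\ue)\Pe+\eps\pxx\eta(\ue)-\eps\eta''(\ue)(\px\ue)^2.
\end{equation*}
The term $\eta'(\ue)\Pe$ is bounded in $L^\infty_{\loc}$ hence lies in a compact subset of $\Hneg$; the term $\eps\pxx\eta(\ue)\to0$ in $H^{-1}_{\loc}$ because $\eps\px\ue$ is bounded in $L^2((0,T)\times(0,\infty))$ by \eqref{eq:l2-u-1}; and $\eps\eta''(\ue)(\px\ue)^2$ is bounded in $L^1_{\loc}$ (again by \eqref{eq:l2-u-1}) hence lies in a compact subset of $W^{-1,p}_{\loc}$ for $1<p<2$, while it is also bounded in $W^{-1,\infty}_{\loc}$, so Murat's lemma puts it in a compact subset of $\Hneg$. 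Thus $\pt\eta(\ue)+\px q(\ue)$ is precompact in $\Hneg$ for every such $\eta$, and the div-curl lemma (applied to the pairs $(\eta_1(\ue),q_1(\ue))$ and $(q_2(\ue),\eta_2(\ue))$ for two entropy pairs) yields a Tartar commutation relation for the Young measure associated with $\seq{\ue}$; since the flux $u\mapsto u^3$ is genuinely nonlinear (its second derivative $6u$ vanishes only at a point), the measure reduces to a Dirac mass, giving $\ue\to u$ in $\Lenloc((0,T)\times(0,\infty))$ and a.e.

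With strong a.e. convergence in hand, passing to the limit is routine: $\ue^3\to u^3$ a.e. and boundedly, $\int_0^x\ue(t,y)\dy\to\int_0^x u(t,y)\dy=P$ locally uniformly, and $\eps\pxx\ue\to0$ in $\D'$, so $u$ solves \eqref{eq:integ} in the distributional sense with $\px P=u$, $P(t,0)=0$. For the entropy inequality \eqref{eq:ent2} I would take $\eta(u)=|u-c|$ approximated by smooth convex $\eta_\delta$, multiply \eqref{eq:OHepsw} by $\eta_\delta'(\ue)\phi$ for nonnegative $\phi\in C^2$ with compact support, integrate by parts on $(0,\infty)\times(0,\infty)$ keeping the boundary terms at $x=0$ and $t=0$, use $-\eps\eta_\delta''(\ue)(\px\ue)^2\le0$ to discard the viscous dissipation, and send first $\eps=\epn\to0$ (using a.e. convergence, the initial-data convergence \eqref{eq:u0eps}, and $\ges\to g$ in $L^1_{\loc}$) and then $\delta\to0$; the only delicate point is the boundary term, where the flux contribution $\eps(\px\ue)\eta_\delta'(\ue)\phi(t,0)$ has an unfavorable sign unless handled via the BLN-type boundary-entropy inequality, so there one uses the Bardos--LeRoux--N\'ed\'elec formulation together with the existence of the strong trace $u^\tau_0$ guaranteed by \eqref{eq:OHsmooth} and \cite[Theorem 1.1]{CKK}, which is precisely the form \eqref{eq:ent2} is written in. The main obstacle is thus twofold: verifying the $\Hneg$-compactness of the source-perturbed entropy production (so that compensated compactness applies despite the nonlocal term $\Pe$), and correctly accounting for the boundary layer at $x=0$ so that the limiting boundary term matches the $\sgn{g-c}\bigl((u^\tau_0)^3-c^3\bigr)$ term in \eqref{eq:ent2}.
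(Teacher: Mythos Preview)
Your proposal is correct and follows essentially the same route as the paper: the identical three-term decomposition of the entropy production, Murat's lemma plus Tartar's compensated compactness for strong $L^p_{\loc}$ convergence of $\ue$, and then passage to the limit in the Kru\v{z}kov/BLN inequality using the strong trace from \cite{CKK}. The only place where the paper is more concrete than your sketch is the boundary term: it actually computes $\lim_{\eps\to0}\eps\,\px\ue(t,0)$ by testing the viscous equation against $\rho_\nu(x)\phi(t,x)$ with $\rho_\nu$ a boundary-layer cutoff supported in $[0,1/\nu]$, sending first $\eps\to0$ and then $\nu\to\infty$ to obtain $\int_0^\infty\bigl(g^3(t)-(u^\tau_0(t))^3\bigr)\phi(t,0)\,dt$, and then uses $\ue(t,0)=g_\eps(t)\to g(t)$ to convert $\sgn{\ue(t,0)-c}$ into $\sgn{g(t)-c}$.
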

We  construct a solution by passing
to the limit in a sequence $\Set{u_{\eps}}_{\eps>0}$ of viscosity
approximations \eqref{eq:OHepsw}. We use the
compensated compactness method \cite{TartarI}.

\begin{lemma}\label{lm:conv-u}
Let $T>0$. There exists a subsequence
$\{\uek\}_{k\in\N}$ of $\{\ue\}_{\eps>0}$
and a limit function $  u\in L^{\infty}((0,T)\times(0,\infty))$
such that
\begin{equation}\label{eq:convu}
    \textrm{$\uek \to u$ a.e.~and in $L^{p}_{loc}((0,T)\times(0,\infty))$, $1\le p<\infty$}.
\end{equation}
Moreover, we have
\begin{equation}
\label{eq:conv-P}
\textrm{$\Pek \to P$ a.e.~and in $L^{p}_{loc}(0,T;W^{1,p}_{loc}(0,\infty))$, $1\le p<\infty$},
\end{equation}
where
\begin{equation}
\label{eq:tildeu}
P(t,x)=\int_0^x u(t,y)dy,\qquad t\ge 0,\quad x\ge 0,
\end{equation}
and \eqref{eq:ent2} holds true.
\end{lemma}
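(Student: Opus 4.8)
The plan is to pass to the limit in the viscosity approximation \eqref{eq:OHepsw} using the compensated compactness machinery, exactly as in the Schäfer–Wayne / plasma-physics literature cited. First I would record the uniform bounds already established: by Lemma \ref{lm:linfty-u} the family $\{\ue\}$ is bounded in $L^\infty((0,T)\times(0,\infty))$, by Lemma \ref{lm:l2-u-1} and Lemma \ref{lm:P-2u-4} the family $\{\Pe\}$ is bounded in $L^\infty((0,T);W^{1,\infty}(0,\infty))$ (via \eqref{eq:p-infty} together with $\px\Pe=\ue$), and $\eps\int_0^t\norm{\px\ue(s,\cdot)}_{L^2}^2\,ds\le C(T)$. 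These give, along a subsequence, a weak-$\star$ limit $u$ in $L^\infty$ and, by Ascoli on the $x$-variable for $\Pe$, a limit $P$ with $P(t,x)=\int_0^x u(t,y)\,dy$, which is \eqref{eq:tildeu} and the convergence \eqref{eq:conv-P} once we upgrade $\uek\to u$ to a.e. convergence.

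The heart of the matter is the a.e. convergence \eqref{eq:convu}. The plan is to apply the div-curl lemma / Tartar's compensated compactness to the entropy–entropy-flux pairs of the conservation law $\pt u+\px(u^3)=\Pe+\eps\pxx\ue$. For a smooth convex $\eta$ with flux $q(u)=\int^u 3\xi^2\eta'(\xi)\,d\xi$, compute
\begin{equation*}
\pt\eta(\ue)+\px q(\ue)=\eta'(\ue)\Pe+\eps\pxx\eta(\ue)-\eps\eta''(\ue)(\px\ue)^2.
\end{equation*}
I would split the right-hand side: $\eta'(\ue)\Pe$ is bounded in $L^\infty$ hence precompact in $W^{-1,p}_{loc}$ for $p<\infty$; $\eps\pxx\eta(\ue)=\eps\px(\eta'(\ue)\px\ue)\to 0$ in $H^{-1}_{loc}$ because $\eps\norm{\px\ue}_{L^2}^2\le C(T)$ so $\eps^{1/2}\px\ue$ is $L^2$-bounded and $\eps^{1/2}\to 0$; and $\eps\eta''(\ue)(\px\ue)^2$ is bounded in $L^1_{loc}$, hence (Murat's lemma, together with the $W^{-1,\infty}_{loc}$ bound coming from the equation) lies in a compact subset of $H^{-1}_{loc}$. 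Thus $\pt\eta(\ue)+\px q(\ue)$ is precompact in $H^{-1}_{loc}$ for every such pair. The Tartar commutation relation applied to the Young measure $\nu_{t,x}$ then forces, because the flux $u\mapsto u^3$ is genuinely nonlinear (its derivative $3u^2$ is not constant on any interval), that $\nu_{t,x}$ is a Dirac mass; this yields \eqref{eq:convu}. I expect this step — verifying the $H^{-1}_{loc}$ compactness of the entropy dissipation and invoking the reduction of the Young measure — to be the main obstacle, though it is by now standard for scalar conservation laws with a nice source.

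Finally, with \eqref{eq:convu} in hand I would pass to the limit in the weak formulation of \eqref{eq:OHepswint}: the nonlinear term $\ue^3\to u^3$ in $L^1_{loc}$, the source $\Pe\to P$ a.e.\ and boundedly, and $\eps\pxx\ue\to 0$ in $\D'$, giving the distributional form of \eqref{eq:integ}; the boundary relation $P(t,0)=0$ is inherited from $\Pe(t,0)=0$, and $\px P=u$ from $\px\Pe=\ue$, which also gives \eqref{eq:conv-P}. For the entropy inequality \eqref{eq:ent2} I would take $c\in\R$, use $\eta_\delta$ a smooth convex approximation of $|\,\cdot-c\,|$ with $q_\delta\to\sgn{\cdot-c}(\cdot^3-c^3)$, multiply the viscous equation by $\eta_\delta'(\ue)\phi$ for a nonnegative $\phi\in C^2$ with compact support, integrate by parts over $(0,\infty)\times(0,\infty)$ keeping the boundary terms at $x=0$ and $t=0$, discard the nonpositive viscosity term $-\eps\int\eta_\delta''(\ue)(\px\ue)^2\phi\le0$, and pass first $\eps\to0$ (using \eqref{eq:convu}, \eqref{eq:conv-P}, the initial-data convergence \eqref{eq:u0eps}, and the strong trace result \cite[Theorem 1.1]{CKK} which identifies $\lim_{x\to0^+}\ue=\ges\to g$ together with the trace $u_0^\tau$ of $u$ appearing in the boundary flux term) and then $\delta\to0$; the $\eps\px\ue(t,0)$-type boundary contributions are controlled since $\eps^2\int_0^t(\px\ue(s,0))^2\,ds\le C(T)$ by \eqref{eq:u-4-p-2}, so $\eps\px\ue(\cdot,0)\to0$ in $L^2(0,T)$. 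This produces \eqref{eq:ent2} and completes the proof.
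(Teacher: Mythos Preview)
Your compensated compactness argument for \eqref{eq:convu} and \eqref{eq:conv-P} is essentially the paper's: same three-term splitting of the entropy dissipation, same use of Murat's lemma, same appeal to Tartar's reduction of the Young measure. That part is fine.

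The genuine gap is in your treatment of the boundary term when deriving \eqref{eq:ent2}. You claim that $\eps^2\int_0^t(\px\ue(s,0))^2\,ds\le C(T)$ implies $\eps\,\px\ue(\cdot,0)\to 0$ in $L^2(0,T)$; but that estimate only gives \emph{boundedness} of $\eps\,\px\ue(\cdot,0)$ in $L^2(0,T)$, not decay. In fact the viscous boundary flux does \emph{not} vanish in the limit: the paper shows (following Bardos--Leroux--N\'ed\'elec) that
\[
\lim_{k\to\infty}\eps_k\int_0^\infty \px\uek(t,0)\,\phi(t,0)\,dt
=\int_0^\infty\bigl(g^3(t)-(u_0^\tau(t))^3\bigr)\phi(t,0)\,dt,
\]
obtained by testing the viscous equation against $\rho_\nu(x)\phi(t,x)$ with $\rho_\nu$ a cut-off collapsing onto $\{x=0\}$, letting $k\to\infty$, then $\nu\to\infty$. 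Combining this with the flux boundary term $\int_0^\infty\sgn{g(t)-c}(g^3(t)-c^3)\phi(t,0)\,dt$ is precisely what produces the $\sgn{g(t)-c}\bigl((u_0^\tau(t))^3-c^3\bigr)$ contribution in \eqref{eq:ent2}. If you simply discard the viscous boundary term, you end up with $g^3(t)$ in place of $(u_0^\tau(t))^3$, which is the wrong entropy inequality whenever the trace $u_0^\tau$ differs from the boundary datum $g$ --- exactly the situation the BLN formulation is designed to capture.
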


\begin{proof}
Let $\eta:\R\to\R$ be any convex $C^2$ entropy function, and
$q:\R\to\R$ be the corresponding entropy
flux defined by $q'(u)=3u^2\eta'(u)$.
By multiplying the first equation in \eqref{eq:OHepsw} with
$\eta'(\ue)$ and using the chain rule, we get
\begin{equation*}
    \pt  \eta(\ue)+\px q(\ue)
    =\underbrace{\eps \pxx \eta(\ue)}_{=:\CLea_{1,\eps}}
    \, \underbrace{-\eps \eta''(\ue)\left(\px  \ue\right)^2}_{=: \CLea_{2,\eps}}
     \, \underbrace{+\eta'(\ue) \Pe}_{=: \CLea_{3,\eps}},
\end{equation*}
where  $\CLea_{1,\eps}$, $\CLea_{2,\eps}$, $\CLea_{3,\eps}$ are distributions.
Let us show that
\begin{equation*}
\label{eq:H1}
\textrm{$\CLea_{1,\eps}\to 0$ in $H^{-1}((0,T)\times(0,\infty))$, $T>0$.}
\end{equation*}
Since
\begin{equation*}
\eps\pxx\eta(\ue)=\px(\eps\eta'(\ue)\px\ue),
\end{equation*}
for \eqref{eq:l2-u-1} and Lemma \ref{lm:linfty-u},
\begin{align*}
\norm{\eps\eta'(\ue)\px\ue}^2_{L^2((0,T)\times (0,\infty))}&\le\eps ^2\norm{\eta'}^2_{L^{\infty}(J_T)}\int_{0}^{T}\norm{\px\ue(s,\cdot)}^2_{L^2(0,\infty)}ds\\
&\le\eps\norm{\eta'}^2_{L^{\infty}(J_T)}C(T)\to 0,
\end{align*}
where
\begin{equation*}
J_T=\left(-\norm{u_0}_{L^\infty(0,\infty)}- C(T), \norm{u_0}_{L^\infty(0,\infty)}+C(T)\right).
\end{equation*}
We claim that
\begin{equation*}
\label{eq:L1}
\textrm{$\{\CLea_{2,\eps}\}_{\eps>0}$ is uniformly bounded in $L^1((0,T)\times(0,\infty))$, $T>0$}.
\end{equation*}
Again by \eqref{eq:l2-u-1} and Lemma \ref{lm:linfty-u},
\begin{align*}
\norm{\eps\eta''(\ue)(\px\ue)^2}_{L^1((0,T)\times (0,\infty))}&\le
\norm{\eta''}_{L^{\infty}(J_T)}\eps
\int_{0}^{T}\norm{\px\ue(s,\cdot)}^2_{L^2(0,\infty)}ds\\
&\le \norm{\eta''}_{L^{\infty}(J_T)}C(T).
\end{align*}
We have that
\begin{equation*}
\textrm{$\{\CL_{3,\eps}\}_{\eps>0}$ is uniformly bounded in $L^1_{loc}((0,T)\times (0,\infty))$, $T>0$.}
\end{equation*}
Let $K$ be a compact subset of $(0,T)\times (0,\infty)$. Using \eqref{eq:p-infty} and Lemma \ref{lm:linfty-u},
\begin{align*}
\norm{\eta'(\ue)\Pe}_{L^1(K)}&=\int_{K}\vert\eta'(\ue)\vert\vert\Pe\vert
dtdx\\
&\leq 
\norm{\eta'}_{L^{\infty}(J_T)}\norm{\Pe}_{L^{\infty}(I_{T})}\vert K \vert .
\end{align*}
Therefore, Murat's lemma \cite{Murat:Hneg} implies that
\begin{equation}
\label{eq:GMC1}
    \text{$\left\{  \pt  \eta(\ue)+\px q(\ue)\right\}_{\eps>0}$
    lies in a compact subset of $\Hneg((0,T)\times(0,\infty))$.}
\end{equation}
The $L^{\infty}$ bound stated in Lemma \ref{lm:linfty-u}, \eqref{eq:GMC1}, and the
 Tartar's compensated compactness method \cite{TartarI} give the existence of a subsequence
$\{\uek\}_{k\in\N}$ and a limit function $  u\in L^{\infty}((0,T)\times(0,\infty)),\,T>0,$
such that \eqref{eq:convu} holds.

\eqref{eq:conv-P} follows from \eqref{eq:convu}, the H\"older inequality and the identity
\begin{equation*}
P_{\eps_{k}}=\int_{0}^{x}u_{\eps_{k}} dy, \quad \px P_{\eps_{k}}=u_{\eps_{k}}.
\end{equation*}
Finally, we prove \eqref{eq:ent2}.\\
Let $k\in\N$, $c\in\R$ be a constant, and $\phi\in C^{\infty}(\R^2)$ be a nonnegative test function with compact support. Multiplying the first equation of\eqref{eq:OHepsw} by $\sgn{\ue-c}$, we have
\begin{align*}
\pt \vert \uek -c\vert&+\px\left(\sgn{\uek-c}\left(\uek^3-c^3\right)\right)\\
&-\sgn{\uek-c}\Pek-\epsk\pxx\vert \uek -c\vert \le 0.
\end{align*}
Multiplying by $\phi$ and integrating over $(0,\infty)^2$, we get
\begin{align*}
&\int_{0}^{\infty}\!\!\!\!\int_{0}^{\infty}\left(\vert \uek -c\vert \pt\phi+\left(\sgn{\uek-c}\left(\uek^3-c^3\right)\right)\px\phi\right)dtdx\\
&\qquad +\int_{0}^{\infty}\!\!\!\!\int_{0}^{\infty}\sgn{\uek-c}\Pek dtdx -\epsk  \int_{0}^{\infty}\!\!\!\!\int_{0}^{\infty}\px\vert \uek -c\vert\px\phi dtdx\\
&\qquad + \int_{0}^{\infty}\vert u_{0}(x) -c\vert\phi(0,x) dx + \int_{0}^{\infty}\sgn{g_{\eps_{k}}(t)-c}\left(g^3_{\eps_{k}}(t)-c^3\right)\phi(t,0)dt\\
&\qquad -\epsk\int_{0}^{\infty}\px\vert \uek(t,0) -c\vert\phi(t,0)dt \ge 0.
\end{align*}
Thanks to \eqref{eq:u0eps} and  Lemmas \ref{lm:l2-u-1} and \ref{lm:linfty-u}, when $k\to \infty$, we have
\begin{align*}
&\int_{0}^{\infty}\!\!\!\!\int_{0}^{\infty}\left(\vert u -c\vert \pt\phi+\left(\sgn{u-c}\left(u^3-c^3\right)\right)\px\phi\right)dtdx\\
&\qquad +\int_{0}^{\infty}\!\!\!\!\int_{0}^{\infty}\sgn{u-c}P dtdx + \int_{0}^{\infty}\vert u_{0}(x) -c\vert\phi(0,x) dx\\
&\qquad +\int_{0}^{\infty}\sgn{g(t)-c}\left(g^3(t)-c^3\right)\phi(t,0)dt\\
&\qquad -\lim_{\epsk}\epsk\int_{0}^{\infty}\px\vert \uek(t,0) -c\vert\phi(t,0)dt \ge 0.
\end{align*}
We have to prove that (see \cite{BRN})
\begin{equation}
\label{eq:ux-in-0}
\begin{split}
&\lim_{\epsk}\epsk\int_{0}^{\infty}\px\vert \uek(t,0) -c\vert\phi(t,0)dt\\
&\qquad = \int_{0}^{\infty}\sgn{g(t)-c}\left(g^3(t)-(u^\tau_0(t))^3\right)\phi(t,0)dt.
\end{split}
\end{equation}
Let $\{\rho_{\nu}\}_{\nu\in\N}\subset C^{\infty}(\R)$ be such that
\begin{equation}
0\le \rho_{\nu} \le 1, \quad \rho_{\nu}(0)=1, \quad \vert \rho'_{\nu}\vert \le 1, \quad x\ge \frac{1}{\nu} \quad \Longrightarrow \quad \rho_{\nu}(x)=0.
\end{equation}
Using $(t,x) \mapsto \rho_{\nu}(x)\phi(t,x)$ as test function for the first equation of \eqref{eq:OHepsw} we get
\begin{align*}
&\int_{0}^{\infty}\!\!\!\!\int_{0}^{\infty}\left(\uek\pt\phi\rho_{\nu}+\uek^3\px\phi\rho_{\nu} + \uek^3\phi\rho'_{\nu}\right)dtdx +\int_{0}^{\infty}\!\!\!\!\int_{0}^{\infty}\Pek\phi\rho_{\nu} dtdx\\
&\qquad -\epsk\int_{0}^{\infty}\!\!\!\!\int_{0}^{\infty}\px\uek\left(\px\phi\rho_{\nu}+\phi\rho'_{\nu}\right)dtdx + \int_{0}^{\infty} u_{0}(x)\phi(0,x)\rho_{\nu}(x)dx\\
&\qquad +\int_{0}^{\infty} g^3_{\epsk}(t)\phi(t,0) dt -\epsk\int_{0}^{\infty}\px\uek(t,0)\phi(t,0) dt=0.
\end{align*}
As $k\to\infty$, we obtain that
\begin{align*}
&\int_{0}^{\infty}\!\!\!\!\int_{0}^{\infty}\left(u\pt\phi\rho_{\nu}+u^3\px\phi\rho_{\nu} + u^3\phi\rho'_{\nu}\right)dtdx +\int_{0}^{\infty}\!\!\!\!\int_{0}^{\infty}P\phi\rho_{\nu}dtdx\\
&\qquad + \int_{0}^{\infty} u_{0}(x)\phi(0,x)\rho_{\nu}dx +\int_{0}^{\infty} g^3(t)\phi(t,0) dt\\
&\quad  =\lim_{\epsk}\epsk\int_{0}^{\infty}\px\uek(t,0)\phi(t,0) dt.
\end{align*}
Sending $\nu\to\infty$, we get
\begin{equation*}
\lim_{\epsk}\epsk\int_{0}^{\infty}\px\uek(t,0)\phi(t,0) dt= \int_{0}^{\infty}\left(g^3(t)- (u^\tau_0(t))^3\right)\phi(t,0) dt.
\end{equation*}
Therefore, due to the strong convergence of $g_{\epsk}$ and the continuity of $g$ we have
\begin{align*}
&\lim_{\epsk}\epsk\int_{0}^{\infty}\px\vert \uek(t,0)-c\vert\phi(t,0)dt\\
&\qquad = \lim_{\epsk} \int_{0}^{\infty}\px\uek(t,0)\sgn{\uek(t,0)-c}\phi(t,0)dt\\
&\qquad = \lim_{\epsk} \int_{0}^{\infty}\px\uek(t,0)\sgn{g_{\epsk}(t)-c}\phi(t,0)dt\\
&\qquad = \int_{0}^{\infty}\sgn{g(t)-c}\left(g^3(t)- (u^\tau_0(t))^3\right)\phi(t,0) dt,
\end{align*}
that is \eqref{eq:ux-in-0}.
\end{proof}

\begin{proof}[Proof of Theorem \ref{th:main}]
Lemma \eqref{lm:conv-u} gives the existence of an entropy solution $u$ for \eqref{eq:OHw-u}, or
equivalently \eqref{eq:integ}.

We observe that, fixed $T>0$, the solutions of \eqref{eq:OHw-u}, or equivalently \eqref{eq:integ}, are bounded in $(0,T)\times\R$.
Therefore, using \cite[Theorem $1.1$]{Cd}, $u$ is unique and \eqref{eq:stability} holds true.
\end{proof}

\end{document}